%
\documentclass[runningheads]{llncs}
\usepackage{graphicx}
\usepackage{float}
\usepackage{mathtools}
\usepackage{amssymb, amsmath, latexsym}
\usepackage{nicefrac}
\usepackage{hhline}
\usepackage{multirow}
\usepackage{pifont}
\newcommand{\cmark}{\ding{51}}%
\newcommand{\xmark}{\ding{55}}%
\usepackage[colorinlistoftodos,bordercolor=orange,backgroundcolor=orange!20,linecolor=orange,textsize=scriptsize]{todonotes}
\usepackage{algorithm}\usepackage{algpseudocode}
\newcommand{\argmin}{\mathop{\arg\!\min}}
\newcommand{\argmax}{\mathop{\arg\!\max}}

\def \R {\mathbb R}
\def\eqdef{\overset{\text{def}}{=}}

\newcommand{\EndProof}{\begin{flushright}$\square$\end{flushright}}

\newcommand{\specialcell}[2][c]{\begin{tabular}[#1]{@{}c@{}}#2\end{tabular}}
  


\newcommand{\EE}{\mathbf{E}}

\def\R{\mathbb{R}}

\def\R{\mathbb R}

\def\EE{\mathbb E}

\def\la{\langle}
\def\ra{\rangle}


%

\begin{document}
\title{Gradient-Free Methods with Inexact Oracle for Convex-Concave Stochastic Saddle-Point Problem\thanks{The research of A. Beznosikov was partially supported by RFBR, project number 19-31-51001. The research of A. Gasnikov was partially supported by RFBR, project number 18-29-03071 mk and was partially supported by the Ministry of Science and Higher Education of the Russian Federation (Goszadaniye) no 075-00337-20-03.}}
\titlerunning{Gradient-Free Methods for Saddle-Point Problem}
%
\author{Aleksandr Beznosikov\inst{1,2}\and
Abdurakhmon Sadiev\inst{1}\and
Alexander Gasnikov\inst{1,2,3,4}}
\authorrunning{A. Beznosikov, A. Sadiev and A. Gasnikov}
%
\institute{Moscow Institute of Physics and Technology, Russia \and
 Sirius University of Science and Technology, Russia
\and
Institute for Information Transmission Problems RAS, Russia  \and
Caucasus Mathematical Center, Adyghe State University, Russia}
\maketitle              
\begin{abstract}
In the paper, we generalize the approach Gasnikov et. al, 2017, which allows solving (stochastic) convex optimization problems with an inexact gradient-free oracle, to the convex-concave saddle-point problem. The proposed approach works, at least, like the best existing approaches. But for a special set-up (simplex type constraints and closeness of Lipschitz constants in 1 and 2 norms) our approach reduces $\nicefrac{n}{\log n}$ times the required number of oracle calls (function calculations). Our method uses a stochastic approximation of the gradient via finite differences. In this case, the function must be specified not only on the optimization set itself but in a certain neighbourhood of it. In the second part of the paper, we analyze the case when such an assumption cannot be made, we propose a general approach on how to modernize the method to solve this problem, and also we apply this approach to particular cases of some classical sets.

\keywords{zeroth-order optimization \and saddle-point problem \and stochastic optimization}
\end{abstract}
\section{Introduction}\label{sec:intro}
In the last decade in the ML community, a big interest cause different applications of Generative Adversarial Networks (GANs) \cite{goodfellow2016nips}, which reduce the ML problem to the saddle-point problem, and the application of gradient-free methods for Reinforcement Learning problems \cite{sutton2018reinforcement}. Neural networks become rather popular in Reinforcement Learning \cite{langley00}. Thus, there is an interest in gradient-free methods for saddle-point problems
\begin{equation}
    \label{problem}
    \min_{x\in \mathcal{X}}\max_{y\in \mathcal{Y}}  \varphi(x, y).  
\end{equation}
One of the natural approaches for this class of problems is to construct a stochastic approximation of a gradient via finite differences. In this case, it is natural to expect that the complexity of the problem \eqref{problem} in terms of the number of function calculations is $\sim n$ times large in comparison with the complexity in terms of the number of gradient calculations, where $n = \dim \mathcal{X}+ \dim \mathcal{Y}$. Is it possible to obtain a better result? In this paper, we show that this factor can be reduced in some situation to a much smaller factor $\log n$. 

We use the technique, developed in \cite{gasnikov2017stochastic,gasnikov2016gradient} for stochastic gradient-free non-smooth convex optimization problems (gradient-free version of mirror descent \cite{nemirovski}) to propose a stochastic gradient-free version of saddle-point variant of mirror descent \cite{nemirovski} for non-smooth convex-concave saddle-point problems.

The concept of using such an oracle with finite differences is not new (see \cite{Shamir15}, \cite{duchi2013optimal}). For such an oracle, it is necessary that the function is defined in some neighbourhood of the initial set of optimization, since when we calculate the finite difference, we make some small step from the point, and this step can lead us outside the set. As far as we know, in all previous works, the authors proceed from the fact that such an assumption is fulfilled or does not mention it at all. We raise the question of what we can do when the function is defined only on the given set due to some properties of the problem.

\subsection{Our contributions}\label{ssec:contr}

In this paper, we present a new method called zeroth-order Saddle-Point Algorithm ({\tt zoSPA}) for solving a convex-concave saddle-point problem \eqref{problem}. Our algorithm uses a zeroth-order biased oracle with stochastic and bounded deterministic noise. We show that if the noise $\sim \varepsilon$ (accuracy of the solution), then the number of iterations necessary to obtain $\varepsilon-$solution on set with diameter $\Omega \subset \mathbb{R}^n$ is $\mathcal{O} \left( \frac{M^2 \Omega^2 }{\varepsilon^2}n\right)$ or $\mathcal{O} \left( \frac{ M^2 \Omega^2 }{\varepsilon^2}\log n\right)$ (depends on the optimization set, for example, for a simplex, the second option with $\log n$ holds), where $M^2$ is a bound of the second moment of the gradient together with stochastic noise (see below, \eqref{bound}).

In the second part of the paper, we analyze the 
structure of an admissible set. We give a general approach on how to work in the case when we are forbidden to go beyond the initial optimization set. Briefly, it is to consider the "reduced"{} set and work on it.

Next, we show how our algorithm works in practice for various saddle-point problems and compare it with full-gradient mirror descent.

\section{Notation and Definitions}\label{sec:notation}

We use $\la x,y \ra \eqdef \sum_{i=1}^nx_i y_i$ to define inner product of $x,y\in\R^n$ where $x_i$ is the $i$-th component of $x$ in the standard basis in $\R^n$. Hence we get the definition of $\ell_2$-norm in $\R^n$ in the following way $\|x\|_2 \eqdef \sqrt{\la x, x \ra}$. We define $\ell_p$-norms as $\|x\|_p \eqdef \left(\sum_{i=1}^n|x_i|^p\right)^{\nicefrac{1}{p}}$ for $p\in(1,\infty)$ and for $p = \infty$ we use $\|x\|_\infty \eqdef \max_{1\le i\le n}|x_i|$. The dual norm $\|\cdot\|_q$ for the norm $\|\cdot\|_p$ is defined in the following way: $\|y\|_q \eqdef \max\left\{\la x, y \ra\mid \|x\|_p \le 1\right\}$. Operator $\EE[\cdot]$ is full mathematical expectation and operator $\EE_\xi[\cdot]$ express conditional mathematical expectation. 

\begin{definition}[$M$-Lipschitz continuity]
Function $f(x)$ is $M$-Lipschitz continuous in $X\subseteq \R^n$ with $M > 0$ w.r.t. norm $\|\cdot\|$ when 
\begin{equation*}
\label{Lipschitz continuous}
| f(x)- f(y)|\leq M\|x-y\|, \quad \forall\ x, y\in X.
\end{equation*}
\end{definition}

\begin{definition}[$\mu$-strong convexity]
Function $f(x)$ is $\mu$-strongly convex w.r.t. norm $\|\cdot\|$ on $X\subseteq \R^n$ when it is continuously differentiable and there is a constant $\mu > 0$ such that the following inequality holds:
\begin{equation*}
\label{strong convexity}
f(y) \geq f(x) + \langle \nabla f(x), y-x \rangle + \frac{\mu}{2}\|y-x\|^2, \quad \forall\ x, y\in X.
\end{equation*}
\end{definition}

\begin{definition}[Prox-function]
Function $d(z): \mathcal{Z} \to \mathbb{R}$ is called prox-function if $d(z)$ is $1$-strongly convex w.r.t. $\| \cdot \|$-norm and differentiable on $\mathcal{Z}$ function. 
\end{definition}
\begin{definition}[Bregman divergence]
Let $d(z): \mathcal{Z} \to \mathbb{R}$ is prox-function. For  any  two  points  $z,w \in \mathcal{Z}$ we define Bregman divergence $V_z(w)$ associated with $d(z)$ as follows: 
\begin{equation*}
    \label{Bregman}
    V_z(w) = d(w) - d(z) - \langle \nabla d(z), w - z \rangle.
\end{equation*}
\end{definition}

We denote the Bregman-diameter $\Omega_{\mathcal{Z}}$ of $\mathcal{Z}$ w.r.t.\ $V_{z_1}(z_2)$ as \\
$\Omega_{\mathcal{Z}} \eqdef \max\{\sqrt{2V_{z_1}(z_2)}\mid z_1, z_2 \in \mathcal{Z}\}$.

\begin{definition}[Prox-operator]
Let $V_z(w)$ Bregman divergence. For all $x \in \mathcal{Z}$ define prox-operator of $\xi$:
\end{definition}
\begin{equation*}
    \text{prox}_x (\xi) = \text{arg}\min_{y \in \mathcal{Z}} \left(V_x(y) + \langle \xi , y \rangle \right).
\end{equation*}

\section{Main Result}\label{sec:main_res}

\subsection{Non-smooth saddle-point problem}

We consider the saddle-point problem \eqref{problem}, where $\varphi(\cdot, y)$ is convex function defined on compact convex set $\mathcal{X} \subset \mathbb{R}^{n_x}$,  $\varphi(x,\cdot)$ is concave function defined on compact convex set $\mathcal{Y} \subset \mathbb{R}^{n_y}$. 

We call an inexact stochastic zeroth-order oracle $\widetilde{\varphi}(x, y, \xi)$ at each iteration. Our model corresponds to the case when the oracle gives an inexact noisy function value. We have stochastic unbiased noise, depending on the random variable $\xi$ and biased deterministic noise. One can write it the following way:
\begin{eqnarray}
\label{PrSt}
    &\tilde \varphi (x,y, \xi)  = \varphi(x,y,\xi) + \delta(x,y),\\
\label{eq:PrSt1}
    &\mathbb{E}_{\xi}[ \widetilde{\varphi}(x, y, \xi)] =  \widetilde{\varphi}(x, y), \quad \mathbb{E}_{\xi}[\varphi(x,y,\xi)] = \varphi(x,y),
\end{eqnarray}
where random variable $\xi$ is responsible for unbiased stochastic noise and  $\delta(x,y)$ -- for deterministic noise.

We assume that exists such positive constant $M$ that for all $x,y\in \mathcal{X} \times  \mathcal{Y}$ we have 
\begin{equation}
\label{bound}
\|\nabla \varphi(x,y,\xi)\|_2 \leq M(\xi),\quad 
    \mathbb{E}[M^2(\xi)] = M^2.
\end{equation}
By $ \nabla {\varphi}(x,y, \xi)$ we mean a block vector consisting of two vectors $\nabla_x {\varphi}(x,y, \xi)$ and $\nabla_y {\varphi}(x,y, \xi)$.
One can prove that $\varphi(x,y,\xi)$ is $M(\xi)$-Lipschitz w.r.t. norm $\|\cdot\|_2$ and that $\|\nabla \varphi(x,y)\|_2 \leq M$.

Also the following assumptions are satisfied:
\begin{eqnarray}
\label{eq:PrSt2}
    |\widetilde{ \varphi}(x, y, \xi) - \varphi(x, y, \xi)| = |\delta (x, y)|\leq \Delta.
\end{eqnarray}

For convenience, we denote $\mathcal{Z} =  \mathcal{X} \times  \mathcal{Y}$  and then $z\in \mathcal{Z} $ means $z \eqdef (x,y)$, where $x \in \mathcal{X}$,  $y \in \mathcal{Y}$.  When we use $\varphi(z)$, we mean $\varphi(z) = \varphi(x,y)$, and $\varphi(z, \xi) = \varphi(x,y, \xi)$.

For $\mathbf{e} \in \mathcal{RS}^n_2(1)$ (a random vector uniformly distributed on the Euclidean unit sphere) and some constant $\tau$ let $\tilde \varphi(z + \tau \mathbf{e}, \xi) \eqdef \tilde \varphi(x + \tau \mathbf{e}_x, y+ \tau \mathbf{e}_y, \xi)$, where $\mathbf{e}_x$ is the first part of $\mathbf{e}$ size of dimension $n_x \eqdef \text{dim}(x)$, and $\mathbf{e}_y$ is the second part of dimension $n_y \eqdef \text{dim}(y)$. And  $n \eqdef n_x + n_y$. Then define estimation of the gradient through the difference of functions:
\begin{equation}
    \label{eq:PrSt3}
    g(z, \xi, \mathbf{e}) = \frac{n \left(\tilde \varphi(z + \tau \mathbf{e}, \xi) -  \tilde \varphi(z - \tau \mathbf{e}, \xi)\right)}{2\tau}\left(
    \begin{array}{c}
    \mathbf{e}_x\\
    -\mathbf{e}_y \\
    \end{array}
    \right). 
\end{equation}
$g(z, \xi, \mathbf{e})$ is a block vector consisting of two vectors.

Next we define an important object for further theoretical discussion -- a smoothed version of the function $\varphi$ (see \cite{Nesterov}, \cite{Shamir15}).
\begin{definition}
Function $\hat{\varphi}(x, y) = \hat{\varphi}(z)$ defines on set $\mathcal{X}\times \mathcal{Y}$ satisfies:

\begin{equation}
    \label{eq:smoothphi}
    \hat{\varphi}(z) = \mathbb{E}_{\mathbf{e}}\left[\varphi(z+ \tau \mathbf{e}) \right].
\end{equation}
\end{definition}

Note that we introduce a smoothed version of the function only for proof; in the algorithm, we use only the zero-order oracle \eqref{eq:PrSt3}. Now we are ready to present our algorithm:
\begin{algorithm} [H]
	\caption{Zeroth-Order Saddle-Point Algorithm ({\tt zoSPA})}
	\label{alg}
	\begin{algorithmic}
\State 
\noindent {\bf Input:} Iteration limit $N$.
\State Let $z_1 = \argmin\limits_{z \in \mathcal{Z}}d(z)$.
\For {$k=1, 2, \ldots, N$ }
    \State Sample $\mathbf{e}_k$, $\xi_k$ independently.
    \State Initialize $\gamma_k$.
    \State $z_{k+1} =\text{prox}_{z_k}(\gamma_k g(z_k, \xi_k,\mathbf{e}_k))$.
\EndFor
\State 
\noindent {\bf Output:} $\bar z_N$,
\end{algorithmic}
\end{algorithm}
where 
\begin{equation}
    \label{answer}
    \bar z_N = \frac{1}{\Gamma_N} \left(\sum^N_{k = 1} \gamma_k z_k \right), \quad \Gamma_N = \sum^N_{k = 1} \gamma_k .
\end{equation}

In Algorithm \ref{alg}, we use the step $\gamma_k$. In fact, we can take this step as a constant, independent of the iteration number $k$ (see Theorem 1). 



Note that we work only with norms $\| \cdot \|_p$, where $p$ is from 1 to 2 ($q$ is from 2 to $\infty$). In the rest of the paper, including the main theorems, we assume that $p$ is from 1 to 2. 


\begin{lemma}[see Lemma 2 from \cite{Beznosikov}]
\label{beznos}
    For $g(z, \xi,\mathbf{e} )$ defined in \eqref{eq:PrSt3} the following inequalitie holds:
    \begin{eqnarray}
    \label{boundlem0}
        \mathbb{E}\left[ \|g(z, \xi,\mathbf{e} )\|^2_q\right] \leq 2\left(cn M^2 + \frac{n^2\Delta^2}{\tau^2}\right)a^2_q,
    \end{eqnarray}
    where $c$ is some positive constant (independent of $n$) and $a^2_q$ is determined by $\sqrt{\EE[\|e\|_q^4]} \le a^2_q$ and the following statement is true
    \begin{eqnarray}
    a_q^2 = \min\{2q - 1, 32 \log n - 8\} n^{\frac{2}{q} - 1}, \quad \forall n \geq 3.\label{eq:condition_on_u}
\end{eqnarray}
\end{lemma}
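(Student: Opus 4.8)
The plan is to reduce the $\ell_q$-norm of the estimator \eqref{eq:PrSt3} to the squared scalar finite difference times $\|\mathbf{e}\|_q^2$, peel off the deterministic noise by its crude $\Delta$-bound, and then extract the dimension factor $n$ (rather than the naive $n^2$) in the ``true'' part from the concentration of Lipschitz functions on the Euclidean sphere. First I would note that the block vector $(\mathbf{e}_x,-\mathbf{e}_y)$ has the same $\ell_q$-norm as $\mathbf{e}$, so from \eqref{eq:PrSt3},
\begin{equation*}
\|g(z,\xi,\mathbf{e})\|_q^2 = \frac{n^2}{4\tau^2}\bigl(\tilde\varphi(z+\tau\mathbf{e},\xi)-\tilde\varphi(z-\tau\mathbf{e},\xi)\bigr)^2\|\mathbf{e}\|_q^2 .
\end{equation*}
Using \eqref{PrSt} and \eqref{eq:PrSt2} together with $(a+b)^2\le 2a^2+2b^2$, the squared difference is at most $2\bigl(\varphi(z+\tau\mathbf{e},\xi)-\varphi(z-\tau\mathbf{e},\xi)\bigr)^2+8\Delta^2$.

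The deterministic-noise part is easy: its contribution to $\EE[\|g\|_q^2]$ is at most $\frac{n^2}{4\tau^2}\cdot 2\cdot 8\Delta^2\cdot\EE_{\mathbf{e}}[\|\mathbf{e}\|_q^2]$, and by Jensen $\EE_{\mathbf{e}}[\|\mathbf{e}\|_q^2]\le\sqrt{\EE_{\mathbf{e}}[\|\mathbf{e}\|_q^4]}\le a_q^2$; one checks this is exactly $\le 2\cdot\frac{n^2\Delta^2}{\tau^2}a_q^2$, matching the second term of \eqref{boundlem0}. It therefore remains to bound the ``true'' part, i.e.\ to show $\frac{n^2}{2\tau^2}\,\EE\bigl[(\varphi(z+\tau\mathbf{e},\xi)-\varphi(z-\tau\mathbf{e},\xi))^2\|\mathbf{e}\|_q^2\bigr]\le 2cnM^2 a_q^2$ for a suitable absolute constant $c$.

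For this, condition on $\xi$ and apply Cauchy--Schwarz in $\mathbf{e}$:
\begin{equation*}
\EE_{\mathbf{e}}\bigl[(\varphi(z+\tau\mathbf{e},\xi)-\varphi(z-\tau\mathbf{e},\xi))^2\|\mathbf{e}\|_q^2\bigr]\le\sqrt{\EE_{\mathbf{e}}\bigl[(\varphi(z+\tau\mathbf{e},\xi)-\varphi(z-\tau\mathbf{e},\xi))^4\bigr]}\cdot\sqrt{\EE_{\mathbf{e}}[\|\mathbf{e}\|_q^4]},
\end{equation*}
where the second factor is $\le a_q^2$ by definition of $a_q^2$. The key point is the first factor. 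The map $u\mapsto\varphi(z+\tau u,\xi)$ is $\tau M(\xi)$-Lipschitz on $\mathcal{RS}^n_2(1)$ (since $\varphi(\cdot,\cdot,\xi)$ is $M(\xi)$-Lipschitz w.r.t.\ $\|\cdot\|_2$), so by the classical concentration inequality for Lipschitz functions on the sphere (the Poincar\'e/L\'evy inequality, whose spectral gap is of order $n$) it is sub-Gaussian around its mean with variance proxy $O(\tau^2M^2(\xi)/n)$; as $\varphi(z+\tau\mathbf{e},\xi)$ and $\varphi(z-\tau\mathbf{e},\xi)$ have the same mean (the sphere is symmetric under $\mathbf{e}\mapsto-\mathbf{e}$), their difference is centered and sub-Gaussian with the same proxy, hence $\EE_{\mathbf{e}}[(\varphi(z+\tau\mathbf{e},\xi)-\varphi(z-\tau\mathbf{e},\xi))^4]\le c'\tau^4M^4(\xi)/n^2$. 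Taking square roots, then $\EE_\xi[\cdot]$, and using $\EE[M^2(\xi)]=M^2$ from \eqref{bound}, the ``true'' part is $\le\frac{n^2}{2\tau^2}\cdot\frac{\sqrt{c'}\,\tau^2 M^2}{n}\cdot a_q^2=\frac{\sqrt{c'}}{2}nM^2a_q^2$, which gives the claim with $c=\sqrt{c'}/4$. Finally, the explicit bound \eqref{eq:condition_on_u} on $a_q^2$ is a separate elementary computation of $\EE[\|\mathbf{e}\|_q^4]$ for a uniform point on the sphere, e.g.\ via $\mathbf{e}\overset{d}{=}\gamma/\|\gamma\|_2$ with $\gamma$ standard Gaussian and concentration of $\|\gamma\|_2$; the two regimes $2q-1$ and $O(\log n)$ correspond to moderate $q$ versus $q$ close to $\infty$.

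The main obstacle is the passage from the naive factor $n^2$ to $n$ in the gradient-norm term: the crude Lipschitz bound $|\varphi(z+\tau\mathbf{e},\xi)-\varphi(z-\tau\mathbf{e},\xi)|\le 2\tau M(\xi)$ only yields $\EE[\|g\|_q^2]\lesssim n^2M^2a_q^2$, and recovering the extra $1/n$ genuinely requires the sphere concentration estimate above, while also ensuring (via the Cauchy--Schwarz split) that the non-rotationally-invariant weight $\|\mathbf{e}\|_q^2$ does not destroy this gain. Everything else — the noise split, the Jensen step $\EE[\|\mathbf{e}\|_q^2]\le a_q^2$, and tracking absolute constants — is routine.
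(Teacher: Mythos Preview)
Your proof is correct and follows essentially the same route as the paper: split off the deterministic noise via $(a+b)^2\le 2a^2+2b^2$, decouple the scalar difference from $\|\mathbf{e}\|_q^2$ by Cauchy--Schwarz, and recover the crucial $n$ (rather than $n^2$) from concentration of Lipschitz functions on the sphere---the paper quotes this last step as Lemma~\ref{lem:lemma_9_shamir} (Lemma~9 of \cite{Shamir15}) applied to $\mathbf{e}\mapsto\varphi(z+\tau\mathbf{e},\xi)$ after centering at $\alpha=\EE_{\mathbf{e}}[\varphi(z+\tau\mathbf{e},\xi)]$, whereas you apply it directly to the antisymmetric difference, which is an equally valid and slightly cleaner variant. (Minor arithmetic slip: your noise term reads $\tfrac{n^2}{4\tau^2}\cdot 2\cdot 8\Delta^2\cdot\EE[\|\mathbf{e}\|_q^2]$, but the correct count is $\tfrac{n^2}{4\tau^2}\cdot 8\Delta^2\cdot\EE[\|\mathbf{e}\|_q^2]=\tfrac{2n^2\Delta^2}{\tau^2}a_q^2$, which is the bound you then state.)
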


\begin{proof}
     Consider the following chain of inequalities, where we use a simple fact \eqref{eq:squared_sum}:
    \begin{eqnarray*}
  \mathbb{E}\left[ \|g(z, \xi,\mathbf{e} )\|^2_q\right]
  &=&\mathbb{E}\left[\left\|\frac{n}{2\tau}\left(\widetilde{\varphi}(z+ \tau \mathbf{e}, \xi) - \widetilde{\varphi}(z - \tau \mathbf{e}, \xi)\right)\mathbf{e}\right\|_q^2\right] \nonumber\\
  &=& \mathbb{E}\left[\left\|\frac{n}{2\tau}\left(\varphi(z + \tau \mathbf{e},\xi) + \delta(z + \tau \mathbf{e}) -  \varphi(z - \tau \mathbf{e},\xi) - \delta(z - \tau \mathbf{e})\right)\mathbf{e}\right\|_q^2\right] \nonumber\\
  &\leq& \frac{n^2}{2\tau^2}\mathbb{E}\left[\left\|\left(\varphi(z + \tau \mathbf{e},\xi) -  \varphi(z - \tau \mathbf{e},\xi)\right)\mathbf{e}\right\|_q^2\right] \nonumber\\
  &&+ \frac{n^2}{2\tau^2}\mathbb{E}\left[\left\|\left(\delta(z + \tau \mathbf{e}) - \delta(z - \tau \mathbf{e})\right)\mathbf{e}\right\|_q^2\right] \nonumber\\
  &\leq& \frac{n^2}{2\tau^2}\mathbb{E}\left[\left(\varphi(z + \tau \mathbf{e},\xi) -  \varphi(z - \tau \mathbf{e},\xi)\right)^2\left\|\mathbf{e}\right\|_q^2\right] \nonumber\\
  &&+ \frac{n^2}{\tau^2}\mathbb{E}\left[\left(\delta^2(z + \tau \mathbf{e}) + \delta^2(z - \tau \mathbf{e})\right)\left\|\mathbf{e}\right\|_q^2\right]
  \end{eqnarray*}
By independence $\xi$,$\mathbf{e}$ and again \eqref{eq:squared_sum} we have
\begin{eqnarray*}
  \mathbb{E}\left[ \|g(z, \xi,\mathbf{e} )\|^2_q\right]
  &\leq& \frac{n^2}{2\tau^2}\mathbb{E}_{\xi}\left[\mathbb{E}_{\mathbf{e}}\left[\left(\varphi(z + \tau \mathbf{e},\xi) - \alpha -  \varphi(z - \tau \mathbf{e},\xi) + \alpha\right)^2\left\|\mathbf{e}\right\|_q^2\right]\right] \nonumber\\
  &&+ \frac{n^2}{\tau^2}\mathbb{E}\left[\left(\delta^2(z + \tau \mathbf{e}) + \delta^2(z - \tau \mathbf{e})\right)\left\|\mathbf{e}\right\|_q^2\right] \nonumber\\
  &\leq& \frac{n^2}{\tau^2}\mathbb{E}_{\xi}\left[\mathbb{E}_{\mathbf{e}}\left[\left(\left(\varphi(z + \tau \mathbf{e},\xi) - \alpha\right)^2 + \left( \varphi(z - \tau \mathbf{e},\xi) - \alpha\right)^2 \right)\left\|\mathbf{e}\right\|_q^2\right]\right] \nonumber\\
  &&+ \frac{n^2}{\tau^2}\mathbb{E}\left[\left(\delta^2(z + \tau \mathbf{e}) + \delta^2(z - \tau \mathbf{e})\right)\left\|\mathbf{e}\right\|_q^2\right]
  \end{eqnarray*}
Taking into account the symmetric distribution of $\mathbf{e}$  and Cauchy--Schwarz inequality:
\begin{eqnarray*}
  \mathbb{E}\left[ \|g(z, \xi,\mathbf{e} )\|^2_q\right]
  &\leq& \frac{2n^2}{\tau^2}\mathbb{E}_{\xi}\left[\mathbb{E}_{\mathbf{e}}\left[\left(\varphi(z + \tau \mathbf{e},\xi) - \alpha\right)^2 \left\|\mathbf{e}\right\|_q^2\right]\right] \nonumber\\
  &&+ \frac{n^2}{\tau^2}\mathbb{E}\left[\left(\delta^2(z + \tau \mathbf{e}) + \delta^2(z - \tau \mathbf{e})\right)\left\|\mathbf{e}\right\|_q^2\right] \nonumber\\
  &\leq& \frac{2n^2}{\tau^2}\mathbb{E}_{\xi}\left[\sqrt{\mathbb{E}_{\mathbf{e}}\left[\left(\varphi(z + \tau \mathbf{e},\xi) - \alpha\right)^4\right]} \sqrt{\mathbb{E}_{\mathbf{e}}\left[\left\|\mathbf{e}\right\|_q^4\right]}\right] \nonumber\\
  &&+ \frac{n^2}{\tau^2}
  \sqrt{\mathbb{E}\left[\left(\delta^2(z + \tau \mathbf{e}) + \delta^2(z - \tau \mathbf{e})\right)^2\right]} \sqrt{\mathbb{E}\left[\left\|\mathbf{e}\right\|_q^4\right]} \nonumber\\
  &\leq& \frac{2n^2 a_q^2}{\tau^2}\mathbb{E}_{\xi}\left[\sqrt{\mathbb{E}_{\mathbf{e}}\left[\left(\varphi(z + \tau \mathbf{e},\xi) - \alpha\right)^4\right]}\right] + \frac{2n^2 a_q^2 \Delta^2}{\tau^2}
  \end{eqnarray*}  
In the last inequality we use \eqref{eq:PrSt2} and \eqref{eq:condition_on_u}. Substituting $\alpha = \mathbb{E}_{\mathbf{e}}\left[\varphi(z + \tau \mathbf{e},\xi)\right]$, applying Lemma \ref{lem:lemma_9_shamir} with the fact that $\varphi(z + \tau \mathbf{e}, \xi)$ is $\tau M(\xi)$-Lipschitz w.r.t. $\mathbf{e}$ in terms of the $\|\cdot\|_2$-norm we get
\begin{eqnarray*}
  \mathbb{E}\left[ \|g(z, \xi,\mathbf{e} )\|^2_q\right]
  &\leq& 2cn a_q^2\cdot \mathbb{E}_{\xi}\left[M^2(\xi)\right] + \frac{2n^2 a_q^2 \Delta^2}{\tau^2} = 2a_q^2 \left(cn\cdot M^2+ \frac{n^2\Delta^2}{\tau^2} \right)
  \end{eqnarray*}
\EndProof
\end{proof}

Note that in the case with $p=2,~ q=2$ we have $a_q = 1$, this follows not from \eqref{eq:condition_on_u}, but from the simplest estimate.  And from \eqref{eq:condition_on_u} we get that  with $p=1,~ q=\infty$ -- $a_q = \mathcal{O}(\nicefrac{\log n}{n})$ (see also Lemma 4 from \cite{Shamir15}).

\begin{lemma}[see Lemma 8 from \cite{Shamir15}]\label{lemma1}
    Let $\mathbf{e}$ be from $\mathcal{RS}^n_2(1)$. Then function $\hat{\varphi}(z, \xi)$ is convex-concave and satisfies :
    \begin{eqnarray}
        \label{lemma1_0}
        \sup_{z \in \mathcal{Z}} |\hat{\varphi}(z) - {\varphi}(z)| \leq \tau M + \Delta. 
    \end{eqnarray}
\end{lemma}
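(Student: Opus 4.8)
The plan is to establish the two claims of the lemma --- the convex--concave property and the uniform closeness $\sup_{z}|\hat\varphi(z)-\varphi(z)|\le \tau M+\Delta$ --- essentially independently, both by elementary arguments.

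For the convex--concave property I would argue pointwise in the averaging variable. Fix a realization $\mathbf{e}=(\mathbf{e}_x,\mathbf{e}_y)\in\mathcal{RS}^n_2(1)$ and a point $y$. The map $x\mapsto\varphi(x+\tau\mathbf{e}_x,\,y+\tau\mathbf{e}_y)$ is the composition of the convex function $\varphi(\cdot,\,y+\tau\mathbf{e}_y)$ with the affine shift $x\mapsto x+\tau\mathbf{e}_x$, hence convex (on the $\tau$-enlargement of $\mathcal X$, where by assumption $\varphi$ is defined). Since, by \eqref{eq:smoothphi}, $\hat\varphi(x,y)=\E_{\mathbf{e}}[\varphi(x+\tau\mathbf{e}_x,\,y+\tau\mathbf{e}_y)]$ is an average of such functions over the (probability) measure on the sphere, it is convex in $x$ for every fixed $y$; the interchange of expectation and the convexity inequality is justified because $\varphi$ is continuous (being $M$-Lipschitz by \eqref{bound}) and the relevant set is compact, so the integrand is bounded. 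The same argument applied to the concave partial functions $\varphi(x+\tau\mathbf{e}_x,\cdot)$ shows $y\mapsto\hat\varphi(x,y)$ is concave for every fixed $x$; and running it for $\varphi(\cdot,\cdot,\xi)$ instead of $\varphi$ gives the convex--concave property of $\hat\varphi(\cdot,\xi)$ for each fixed $\xi$.

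For the closeness bound I would use that the smoothing is applied to what the oracle actually reports (in expectation over $\xi$), so that $\hat\varphi(z)=\E_{\mathbf{e}}\!\left[\E_\xi[\tilde\varphi(z+\tau\mathbf{e},\xi)]\right]=\E_{\mathbf{e}}[\varphi(z+\tau\mathbf{e})]+\E_{\mathbf{e}}[\delta(z+\tau\mathbf{e})]$ by \eqref{PrSt}--\eqref{eq:PrSt1}; then Jensen's inequality and the triangle inequality give
\[
|\hat\varphi(z)-\varphi(z)|\le \E_{\mathbf{e}}\big[|\varphi(z+\tau\mathbf{e})-\varphi(z)|\big]+\E_{\mathbf{e}}\big[|\delta(z+\tau\mathbf{e})|\big].
\]
The first term is at most $\E_{\mathbf{e}}[M\tau\|\mathbf{e}\|_2]=\tau M$ since $\varphi$ is $M$-Lipschitz w.r.t.\ $\|\cdot\|_2$ and $\|\mathbf{e}\|_2=1$ on $\mathcal{RS}^n_2(1)$; the second is at most $\Delta$ by \eqref{eq:PrSt2}. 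Taking $\sup_{z\in\mathcal Z}$ yields \eqref{lemma1_0}.

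I do not expect a substantive obstacle here; the only care needed is bookkeeping. One must make sure $\hat\varphi$ is well defined, which requires $\varphi$ (hence $\tilde\varphi$) to be specified on the $\tau$-neighbourhood of $\mathcal Z$ --- exactly the standing hypothesis of this part of the paper --- and one must be consistent about whether the $\tau$-smoothing acts on $\varphi$ or on the noisy oracle, since the latter is what produces the additive $\Delta$ rather than the clean $\tau M$ one would obtain for the noiseless smoothed function. Measurability and integrability of $\mathbf{e}\mapsto\varphi(z+\tau\mathbf{e},\xi)$ follow from continuity, so Fubini applies and the order of $\E_\xi$ and $\E_{\mathbf{e}}$ is immaterial.
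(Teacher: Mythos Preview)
Your argument for the closeness bound is the same as the paper's in its essential step: bound $|\hat\varphi(z)-\varphi(z)|$ by $\E_{\mathbf e}[M\|\tau\mathbf e\|_2]=\tau M$ using the $M$-Lipschitz property and $\|\mathbf e\|_2=1$. The one discrepancy is your reading of the definition of $\hat\varphi$. In the paper, \eqref{eq:smoothphi} sets $\hat\varphi(z)=\E_{\mathbf e}[\varphi(z+\tau\mathbf e)]$, i.e.\ the smoothing acts on the \emph{true} function $\varphi$, not on the noisy oracle $\tilde\varphi$. Accordingly the paper's proof stops at the bound $\tau M$; the extra $+\Delta$ in the lemma statement is simply slack (since $\Delta\ge 0$) and does not need to be manufactured by routing through $\tilde\varphi$. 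Your decomposition with the additional $\E_{\mathbf e}[|\delta(z+\tau\mathbf e)|]\le\Delta$ term is harmless and still yields \eqref{lemma1_0}, but it rests on a definition of $\hat\varphi$ different from the one the paper actually uses. You even flag this ambiguity yourself; the resolution is that the paper takes the noiseless smoothing.

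On the convex--concave claim, the paper's proof says nothing at all --- it only treats the inequality. Your argument (convexity/concavity is preserved under affine shifts and under averaging) is the standard one and is more complete than what the paper provides.
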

\begin{proof}
    Using definition \eqref{eq:smoothphi} of $\hat \varphi$:
    \begin{eqnarray*}
        \big|\hat{\varphi}(z) - {\varphi}(z)\big|&=& 
        \big|\mathbb{E}_{\mathbf{e}}[\varphi(z + \tau \mathbf{e})] - \varphi(z)\big| \nonumber\\
        &=& \left|\mathbb{E}_{\mathbf{e}}\left[\varphi(z + \tau \mathbf{e}) - \varphi(z) \right] \right|.
    \end{eqnarray*}
    Since $\|\nabla \varphi(z)\|_2 \leq M$, then $\varphi(z)$ is $M$-Lipschitz:
    \begin{eqnarray*}
       \left|\mathbb{E}_{\mathbf{e}}\left[\varphi(z + \tau \mathbf{e}) - \varphi(z) \right]\right| &\leq&  \left|\mathbb{E}_{\mathbf{e}}\left[ M \|\tau \mathbf{e}\|_2 \right] \right| \leq M\tau .
    \end{eqnarray*}
    \EndProof
\end{proof}

\begin{lemma}[see Lemma 10 from \cite{Shamir15} and Lemma 2 from \cite{Beznosikov}]\label{lemmasham}
    It holds that 
    \begin{eqnarray}
    \label{temp985}
    \tilde \nabla \hat{\varphi}(z) &=& \mathbb{E}_{\mathbf{e}}\left[\frac{n\left(\varphi(z + \tau \mathbf{e}) -  \varphi(z - \tau \mathbf{e})\right)}{2\tau}\left(
    \begin{array}{c}
    \mathbf{e}_x\\
    -\mathbf{e}_y \\
    \end{array}\right)\right],\\
    \|\mathbb{E}_{\mathbf{e}}[ g(z, \mathbf{e}) ] - \tilde \nabla \hat{\varphi}(z)\|_q &\leq&  \frac{\Delta n a_q}{\tau}\label{temp986},
    \end{eqnarray}
    where \begin{eqnarray*}
    g(z, \mathbf{e}) 
    &=& \mathbb{E}_{\xi}\left[g(z, \xi, \mathbf{e})\right] \nonumber\\
    &=& \frac{n \left(\tilde \varphi(z + \tau \mathbf{e}) -  \tilde \varphi(z - \tau \mathbf{e})\right)}{2\tau}\left(
    \begin{array}{c}
    \mathbf{e}_x\\
    -\mathbf{e}_y \\
    \end{array}
    \right).
\end{eqnarray*} 
    Hereinafter, by $\tilde \nabla \hat{\varphi}(z)$ we mean a block vector consisting of two vectors $\nabla_x \hat{\varphi}(x,y)$ and $-\nabla_y \hat{\varphi}(x,y)$.
\end{lemma}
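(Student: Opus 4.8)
The statement packages two essentially independent facts, and I would prove the two displays separately. Display \eqref{temp985} is the saddle-point version of the classical ``gradient of a smoothed function'' identity, and I would get it by invoking that identity in $\R^n$ and then splitting the resulting vector into its $x$- and $y$-blocks. Display \eqref{temp986} is a perturbation estimate: after subtracting \eqref{temp985} from $\E_{\mathbf{e}}[g(z,\mathbf{e})]$, everything except a term carrying the deterministic noise $\delta$ cancels, and that leftover term is controlled by $\Delta$ together with a moment bound on $\|\mathbf{e}\|_q$ already isolated in Lemma~\ref{beznos}.

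For \eqref{temp985}: since $\varphi$ is defined on a $\tau$-neighbourhood of $\mathcal{Z}$ and is $M$-Lipschitz, $\hat\varphi$ is differentiable, and the classical smoothing-gradient identity (Lemma~10 of \cite{Shamir15}; Lemma~2 of \cite{Beznosikov}) gives
\begin{equation*}
  \nabla_z \hat\varphi(z) \;=\; \E_{\mathbf{e}}\!\left[\frac{n\bigl(\varphi(z+\tau\mathbf{e}) - \varphi(z-\tau\mathbf{e})\bigr)}{2\tau}\,\mathbf{e}\right],
\end{equation*}
the two-point antisymmetric form of the right-hand side being justified by the symmetry $\mathbf{e}\leftrightarrow-\mathbf{e}$ of the law of $\mathbf{e}$. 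Since $\mathbf{e}=(\mathbf{e}_x,\mathbf{e}_y)\in\R^{n_x}\times\R^{n_y}$, the first $n_x$ coordinates of this identity read $\nabla_x\hat\varphi(z)=\E_{\mathbf{e}}[\tfrac{n}{2\tau}(\varphi(z+\tau\mathbf{e})-\varphi(z-\tau\mathbf{e}))\mathbf{e}_x]$ and the last $n_y$ coordinates read $\nabla_y\hat\varphi(z)=\E_{\mathbf{e}}[\tfrac{n}{2\tau}(\varphi(z+\tau\mathbf{e})-\varphi(z-\tau\mathbf{e}))\mathbf{e}_y]$; negating the second block -- which is exactly the definition of $\tilde\nabla\hat\varphi$ -- and stacking the two blocks yields \eqref{temp985}. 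Convexity--concavity of $\hat\varphi$ is already recorded in Lemma~\ref{lemma1}, so nothing further is needed here.

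For \eqref{temp986}: from \eqref{PrSt}--\eqref{eq:PrSt1} we have $\E_\xi[\tilde\varphi(w,\xi)] = \varphi(w)+\delta(w)$ for every $w$, so $g(z,\mathbf{e}) = \E_\xi[g(z,\xi,\mathbf{e})]$ of \eqref{eq:PrSt3} is precisely the right-hand side of \eqref{temp985} with $\varphi$ replaced by $\varphi+\delta$. Subtracting \eqref{temp985}, the $\varphi$-contributions cancel and
\begin{equation*}
  \E_{\mathbf{e}}[g(z,\mathbf{e})] - \tilde\nabla\hat\varphi(z) \;=\; \E_{\mathbf{e}}\!\left[\frac{n\bigl(\delta(z+\tau\mathbf{e}) - \delta(z-\tau\mathbf{e})\bigr)}{2\tau}\left(\begin{array}{c}\mathbf{e}_x\\ -\mathbf{e}_y\end{array}\right)\right].
\end{equation*}
Now pass to $\|\cdot\|_q$, move the norm inside the expectation by Jensen's inequality, and use $\|(\mathbf{e}_x,-\mathbf{e}_y)\|_q=\|\mathbf{e}\|_q$ together with $|\delta(z+\tau\mathbf{e})-\delta(z-\tau\mathbf{e})|\le 2\Delta$ from \eqref{eq:PrSt2}; this bounds the left-hand side by $\tfrac{n\Delta}{\tau}\,\E_{\mathbf{e}}\|\mathbf{e}\|_q$. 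Finally $\E_{\mathbf{e}}\|\mathbf{e}\|_q \le \bigl(\E_{\mathbf{e}}\|\mathbf{e}\|_q^4\bigr)^{1/4}\le a_q$ by Jensen and the defining property $\sqrt{\E[\|\mathbf{e}\|_q^4]}\le a_q^2$ from \eqref{eq:condition_on_u}, giving $\|\E_{\mathbf{e}}[g(z,\mathbf{e})]-\tilde\nabla\hat\varphi(z)\|_q\le \Delta n a_q/\tau$, which is \eqref{temp986}.

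I do not expect a genuine obstacle: modulo the cited smoothing-gradient identity, the argument is elementary bookkeeping. Two points deserve attention. First, the block/sign accounting for $\tilde\nabla\hat\varphi=(\nabla_x\hat\varphi,-\nabla_y\hat\varphi)$: one must keep in mind that $\mathbf{e}$ is a \emph{single} vector uniform on the unit sphere of $\R^{n_x+n_y}$, not an independent pair of spherical vectors, so the identity is applied in $\R^n$ and only then read off coordinate by coordinate. Second, the precise smoothing convention under which \eqref{temp985} holds with equality -- namely $\hat\varphi$ understood as the average of $\varphi$ over the Euclidean ball of radius $\tau$ about $z$, the form underlying the Stokes'/divergence-theorem proof behind Lemma~10 of \cite{Shamir15}; this choice is immaterial downstream, since $\hat\varphi$ enters the analysis only through Lemma~\ref{lemma1}, whose estimate is valid for it as well.
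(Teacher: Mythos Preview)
Your proof is correct and follows essentially the same route as the paper's: invoke the Stokes'-theorem identity from \cite{Shamir15} for \eqref{temp985}, then subtract to isolate the $\delta$-term and bound it via \eqref{eq:PrSt2} and the moment control on $\|\mathbf{e}\|_q$ for \eqref{temp986}. Your side remark about the ball-versus-sphere smoothing convention is well taken and in fact flags a genuine (if harmless) ambiguity in the paper's own definition \eqref{eq:smoothphi}.
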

\begin{proof}
    The proof of \eqref{temp985} is given in \cite{Shamir15} and follows from the Stokes' theorem.
    Then 
    \begin{eqnarray*}
    \mathbb{E}_{\mathbf{e}}[ g(z, \mathbf{e}) ] - \tilde \nabla \hat{\varphi}(z) &=&  \mathbb{E}_{\mathbf{e}} \left[ \frac{n \left(\delta(z + \tau \mathbf{e}) -  \delta(z - \tau \mathbf{e})\right)}{2\tau}\left(\begin{array}{c}
    \mathbf{e}_x\\
    -\mathbf{e}_y \\
    \end{array}
    \right)\right].
    \end{eqnarray*}
    Using inequalities \eqref{eq:PrSt2} and definition of $a_q$ in Lemma 1 completes the proof.
    \EndProof
\end{proof}

\begin{lemma}[see Lemma 5.3.2 from \cite{nemirovski}]\label{lemma3}
Define $\Delta_k \eqdef g(z_k, \xi_k, \mathbf{e}_k) -  \tilde\nabla \hat{\varphi}(z_k)$.
Let $D(u) \eqdef \sum^N_{k = 1} \gamma_k \langle \Delta_k  ,u - z_{k}\rangle$. Then we have 
\begin{eqnarray}
    \label{19}
    \mathbb{E}\left[\max_{u \in \mathcal{Z}} D(u)\right] &\leq& 
    \Omega^2 + \frac{\Delta \Omega n a_q}{\tau}\sum\limits_{k=1}^N \gamma_k + M^2_{all}\sum\limits_{k=1}^N \gamma_k^2 , 
\end{eqnarray}
where $M^2_{all} \eqdef
2\left(cn M^2 + \frac{n^2\Delta^2}{\tau^2}\right)a^2_q$ is from Lemma 1.
\end{lemma}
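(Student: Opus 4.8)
The plan is to follow the standard mirror-descent / online-learning argument (as in Lemma 5.3.2 of Nemirovski) but with careful bookkeeping of the bias and stochastic parts of the oracle. First I would decompose $\Delta_k = g(z_k,\xi_k,\mathbf{e}_k) - \tilde\nabla\hat\varphi(z_k)$ into a zero-mean martingale part and a bias part:
\begin{equation*}
\Delta_k = \underbrace{\big(g(z_k,\xi_k,\mathbf{e}_k) - \mathbb{E}_{\mathbf{e}_k}[g(z_k,\mathbf{e}_k)]\big)}_{=:\,\Delta_k^{(1)}} + \underbrace{\big(\mathbb{E}_{\mathbf{e}_k}[g(z_k,\mathbf{e}_k)] - \tilde\nabla\hat\varphi(z_k)\big)}_{=:\,\Delta_k^{(2)}},
\end{equation*}
where $\mathbb{E}[\Delta_k^{(1)}\mid z_k]=0$ and, by \eqref{temp986} in Lemma~\ref{lemmasham}, $\|\Delta_k^{(2)}\|_q \le \Delta n a_q/\tau$.

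Next I would bound $\max_{u\in\mathcal{Z}} D(u)$ by splitting $D(u) = \sum_k \gamma_k\langle\Delta_k^{(1)},u-z_k\rangle + \sum_k\gamma_k\langle\Delta_k^{(2)},u-z_k\rangle$. For the bias term, Cauchy--Schwarz with the dual norms gives $\langle\Delta_k^{(2)},u-z_k\rangle \le \|\Delta_k^{(2)}\|_q\|u-z_k\|_p \le \frac{\Delta n a_q}{\tau}\,\Omega$ for every $u,z_k\in\mathcal{Z}$ (since $\|u-z_k\|_p\le\Omega_{\mathcal Z}$ follows from strong convexity of the prox-function and the definition of the Bregman diameter), so this term contributes at most $\frac{\Delta n a_q}{\tau}\Omega\sum_k\gamma_k$. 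For the martingale term, the trick (exactly as in Nemirovski) is to introduce an auxiliary sequence $v_1=z_1$, $v_{k+1}=\mathrm{prox}_{v_k}(\gamma_k\Delta_k^{(1)})$, and use the standard one-step prox inequality $\gamma_k\langle\Delta_k^{(1)},v_k-u\rangle \le V_{v_k}(u) - V_{v_{k+1}}(u) + \frac{\gamma_k^2}{2}\|\Delta_k^{(1)}\|_q^2$. Telescoping, and writing $\langle\Delta_k^{(1)},u-z_k\rangle = \langle\Delta_k^{(1)},u-v_k\rangle + \langle\Delta_k^{(1)},v_k-z_k\rangle$, the first piece telescopes to at most $V_{v_1}(u)\le\frac12\Omega^2$ plus $\sum_k\frac{\gamma_k^2}{2}\|\Delta_k^{(1)}\|_q^2$, while the second piece $\sum_k\gamma_k\langle\Delta_k^{(1)},v_k-z_k\rangle$ has zero expectation because $v_k,z_k$ are measurable w.r.t. the history before step $k$ and $\mathbb{E}[\Delta_k^{(1)}\mid\text{history}]=0$.

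Finally I would take the maximum over $u$ (the bound $\frac12\Omega^2 + \frac{\Delta n a_q}{\tau}\Omega\sum_k\gamma_k + \sum_k\frac{\gamma_k^2}{2}\|\Delta_k^{(1)}\|_q^2$ is the same for all $u$ after the telescoping, so $\max_u$ passes through harmlessly — one must be slightly careful that the auxiliary sequence $v_k$ may depend on $u$, which is handled by noting the per-$u$ bound and then maximizing), and take full expectation. Using $\mathbb{E}\|\Delta_k^{(1)}\|_q^2 \le \mathbb{E}\|g(z_k,\xi_k,\mathbf{e}_k)\|_q^2 \le 2(cnM^2 + n^2\Delta^2/\tau^2)a_q^2 = M_{all}^2$ from Lemma~\ref{beznos} (the variance is bounded by the second moment), and absorbing the factor $\tfrac12$ into the constant, yields \eqref{19}. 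The main obstacle is the measurability/conditioning argument for the cross term $\sum_k\gamma_k\langle\Delta_k^{(1)},v_k-z_k\rangle$: one has to verify that both $z_k$ and the auxiliary iterate $v_k$ are determined by $\{\mathbf{e}_1,\xi_1,\dots,\mathbf{e}_{k-1},\xi_{k-1}\}$ so that the conditional expectation of $\Delta_k^{(1)}$ given this $\sigma$-algebra vanishes; everything else is bookkeeping of the Bregman telescoping and dual-norm estimates already supplied by Lemmas~\ref{beznos} and~\ref{lemmasham}.
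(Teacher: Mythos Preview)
Your approach is essentially the paper's: introduce an auxiliary prox-sequence $v_k$, telescope the Bregman terms, and use that the zero-mean part of the oracle error is orthogonal (in expectation) to $v_k-z_k$. The only real difference is the order of operations: the paper runs the auxiliary sequence on the \emph{full} $\Delta_k$ with a free scaling parameter $\rho$ (so $v_{k+1}=\mathrm{prox}_{v_k}(-\rho\gamma_k\Delta_k)$), obtains $\max_u D(u)\le \sum_k\gamma_k\langle\Delta_k,v_k-z_k\rangle+\tfrac{\Omega^2}{2\rho}+\tfrac{\rho}{2}\sum_k\gamma_k^2\|\Delta_k\|_q^2$, and only then splits the surviving cross term into the $\xi$-martingale $\tilde\Delta_k=g(z_k,\xi_k,\mathbf e_k)-g(z_k,\mathbf e_k)$ and the remainder, whose $\mathbf e_k$-expectation is the bias controlled by Lemma~\ref{lemmasham}. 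You split first and run the auxiliary sequence only on $\Delta_k^{(1)}$; both orderings are valid.

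There is one small technical gap. Your claim ``$\mathbb E\|\Delta_k^{(1)}\|_q^2\le \mathbb E\|g\|_q^2$ since the variance is bounded by the second moment'' is a Hilbert-space identity and is \emph{not} true for a general $\|\cdot\|_q$ with $q\neq 2$. The crude estimate $\|a-b\|^2\le 2\|a\|^2+2\|b\|^2$ together with Jensen gives only $\mathbb E\|\Delta_k^{(1)}\|_q^2\le 4M_{all}^2$, which yields $\tfrac12\Omega^2+2M_{all}^2\sum_k\gamma_k^2$ and overshoots the stated constant in \eqref{19}. The paper sidesteps this by keeping the parameter $\rho$: with $\mathbb E\|\Delta_k\|_q^2\le 4M_{all}^2$ one gets $\tfrac{\Omega^2}{2\rho}+2\rho M_{all}^2\sum_k\gamma_k^2$, and the choice $\rho=\tfrac12$ gives exactly $\Omega^2+M_{all}^2\sum_k\gamma_k^2$. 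So to make your constants match, either carry a $\rho$ in your auxiliary-sequence step as well, or apply the prox-telescoping to the full $\Delta_k$ rather than to $\Delta_k^{(1)}$.
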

\begin{proof} Let define sequence $v$: $v_1 \eqdef z_1$, $v_{k+1} \eqdef \text{prox}_{v_k} (-\rho \gamma_k \Delta_k)$ for some $\rho > 0$:
\begin{eqnarray}
\label{temp177}
    D(u) &=& \sum\limits_{k=1}^N \gamma_k \langle -\Delta_k, z_k - u \rangle \nonumber\\
    &=& \sum\limits_{k=1}^N \gamma_k \langle -\Delta_k, z_k - v_k \rangle + 
    \sum\limits_{k=1}^N \gamma_k \langle -\Delta_k,  v_k - u \rangle . 
\end{eqnarray}
By the definition of $v$ and an optimal condition for the prox-operator, we have for all $u \in \mathcal{Z}$
\begin{eqnarray*}
    \langle -\gamma_k \rho\Delta_k - \nabla d(v_{k})  + \nabla d(v_{k+1}), u - v_{k+1} \rangle \geq 0.
\end{eqnarray*}
Rewriting this inequality, we get
\begin{eqnarray*}
    \langle -\gamma_k \rho\Delta_k, v_k  - u \rangle \leq \langle -\gamma_k \rho\Delta_k, v_k - v_{k+1} \rangle  + \langle \nabla d (v_{k+1}) - \nabla d (v_k), u - v_{k+1} \rangle.
\end{eqnarray*}
Using \eqref{temp111}:
\begin{eqnarray*}
    \langle -\gamma_k \rho\Delta_k, v_k  - u \rangle \leq \langle -\gamma_k \rho\Delta_k, v_k - v_{k+1} \rangle + V_{v_k}(u) -  V_{v_{k+1}}(u) - V_{v_k}(v_{k+1}).
\end{eqnarray*}
Bearing in mind the Bregman divergence property $2V_x(y) \geq \|x-y\|_p^2$:
\begin{eqnarray*}
    \langle -\gamma_k \rho\Delta_k, v_k  - u \rangle \leq \langle -\gamma_k \rho\Delta_k, v_k - v_{k+1} \rangle + V_{v_k}(u) -  V_{v_{k+1}}(u) - \frac{1}{2}\|v_{k+1} - v_k\|_p^2.
\end{eqnarray*}
Using the definition of the conjugate norm:
\begin{eqnarray*}
    \langle -\gamma_k \rho\Delta_k, v_k  - u \rangle &\leq& \|\gamma_k \rho\Delta_k\|_q\cdot\| v_k - v_{k+1} \|_p + V_{v_k}(u) -  V_{v_{k+1}}(u) - \frac{1}{2}\|v_{k+1} - v_k\|_p^2 \nonumber\\ 
    &\leq& \frac{\rho^2 \gamma_k^2}{2}\|\Delta_k\|_q^2 + V_{v_k}(u) -  V_{v_{k+1}}(u).
\end{eqnarray*}
Summing over $k$ from $1$ to $N$:
\begin{eqnarray*}
    \sum\limits_{k=1}^N \gamma_k \rho \langle -\Delta_k,  v_k - u \rangle \leq V_{v_1}(u) -  V_{v_{N+1}}(u) + \frac{\rho^2}{2}\sum\limits_{k=1}^N \gamma_k^2\|\Delta_k\|_q^2.
\end{eqnarray*}
Notice that $V_x(y) \geq 0$ and $V_{v_1}(u) \leq \nicefrac{\Omega^2}{2}$:
\begin{eqnarray}
\label{temp192}
    \sum\limits_{k=1}^N \gamma_k \langle -\Delta_k,  v_k - u \rangle \leq \frac{\Omega^2}{2\rho} + \frac{\rho}{2}\sum\limits_{k=1}^N \gamma_k^2\|\Delta_k\|_q^2.
\end{eqnarray}
Substituting \eqref{temp192} into \eqref{temp177}:
\begin{eqnarray*}
     D(u) &\leq& \sum\limits_{k=1}^N \gamma_k \langle \Delta_k, v_k - z_k  \rangle +
    \frac{\Omega^2}{2\rho} + \frac{\rho}{2}\sum\limits_{k=1}^N \gamma_k^2\|\Delta_k\|_q^2.  
\end{eqnarray*}
The right side is independent of $u$, then
\begin{eqnarray*}
     \max_{u \in \mathcal{Z}} D(u) &\leq& \sum\limits_{k=1}^N \gamma_k \langle \Delta_k, v_k - z_k  \rangle + 
    \frac{\Omega^2}{2\rho} + \frac{\rho}{2}\sum\limits_{k=1}^N \gamma_k^2\|\Delta_k\|_q^2.  
\end{eqnarray*}
Taking the full expectation and 
introducing a new definition $\tilde \Delta_k \eqdef g(z_k, \xi_k, \mathbf{e}_k) -  g(z_k, \mathbf{e}_k)$:
\begin{eqnarray*}
    \mathbb{E}\left[\max_{u \in \mathcal{Z}} D(u)\right] &\leq& \mathbb{E}\left[\sum\limits_{k=1}^N \gamma_k \langle \Delta_k, v_k - z_k \rangle\right] + 
    \frac{\Omega^2}{2\rho} + \frac{\rho}{2}\mathbb{E}\left[\sum\limits_{k=1}^N \gamma_k^2\|\Delta_k\|_q^2\right] \nonumber\\
    &\leq& \mathbb{E}\left[\sum\limits_{k=1}^N \gamma_k \langle \tilde\Delta_k, v_k - z_k \rangle\right] + \mathbb{E}\left[\sum\limits_{k=1}^N \gamma_k \langle \Delta_k - \tilde\Delta_k, v_k - z_k \rangle\right] \nonumber\\ 
       &&+ 
    \frac{\Omega^2}{2\rho} + \frac{\rho}{2}\mathbb{E}\left[\sum\limits_{k=1}^N \gamma_k^2\|\Delta_k\|_q^2\right]
    .  
\end{eqnarray*}
Using the independence of $\mathbf{e}_1, \ldots, \mathbf{e}_N, \xi_1, \ldots, \xi_N$, we have
\begin{eqnarray*}
    \mathbb{E}\left[\max_{u \in \mathcal{Z}} D(u)\right] &\leq& \mathbb{E}\left[\sum\limits_{k=1}^N \gamma_k \mathbb{E}_{\xi_k}\left[\langle \tilde\Delta_k, v_k - z_k \rangle\right]\right] + \mathbb{E}\left[\sum\limits_{k=1}^N \gamma_k \mathbb{E}_{\mathbf{e}_k}\left[\langle \Delta_k - \tilde\Delta_k, v_k - z_k \rangle\right]\right] \nonumber\\ 
       &&+ 
    \frac{\Omega^2}{2\rho} + \frac{\rho}{2}\mathbb{E}\left[\sum\limits_{k=1}^N \gamma_k^2\|\Delta_k\|_q^2\right]
    .  
\end{eqnarray*}
Note that $v_k - z_k$ does not depend on $\mathbf{e}_k$, $\xi_k$ and $\mathbb{E}_{\xi_k} \tilde\Delta_k = 0$. Then 
\begin{eqnarray*}
    \mathbb{E}\left[\max_{u \in \mathcal{Z}} D(u)\right] &\leq& \mathbb{E}\left[\sum\limits_{k=1}^N \gamma_k \langle \mathbb{E}_{\xi_k}\left[\tilde\Delta_k\right], v_k - z_k \rangle\right] + \mathbb{E}\left[\sum\limits_{k=1}^N \gamma_k \langle \mathbb{E}_{\mathbf{e}_k}\left[\Delta_k - \tilde\Delta_k\right], v_k - z_k \rangle\right] \nonumber\\ 
       &&+ 
    \frac{\Omega^2}{2\rho} + \frac{\rho}{2}\mathbb{E}\left[\sum\limits_{k=1}^N \gamma_k^2\|\Delta_k\|_q^2\right]  \nonumber\\ 
    &=& \mathbb{E}\left[\sum\limits_{k=1}^N \gamma_k \langle \mathbb{E}_{\mathbf{e}_k}\left[\Delta_k - \tilde\Delta_k\right], v_k - z_k \rangle\right] + 
    \frac{\Omega^2}{2\rho} + \frac{\rho}{2}\mathbb{E}\left[\sum\limits_{k=1}^N \gamma_k^2\|\Delta_k\|_q^2\right].  
\end{eqnarray*}
By \eqref{temp986} and definition of diameter $\Omega$ we get
\begin{eqnarray*}
    \mathbb{E}\left[\max_{u \in \mathcal{Z}} D(u)\right] &\leq& \frac{\Delta \Omega n a_q}{\tau}\sum\limits_{k=1}^N \gamma_k + \frac{\Omega^2}{2\rho} + \frac{\rho}{2}\sum\limits_{k=1}^N \gamma_k^2\mathbb{E}\left[\|\Delta_k\|_q^2\right].  
\end{eqnarray*}
To prove the lemma, it remains to estimate $\mathbb{E}\left[\|\Delta_k\|_q^2\right]$:
\begin{eqnarray*}
    \mathbb{E}\left[\|\Delta_k\|_q^2\right] &\leq& \mathbb{E}\left[\|g(z_k, \xi_k, \mathbf{e}_k) -  \tilde\nabla\hat{\varphi}(z_k)\|_q^2\right]  \nonumber\\
    &\leq& 2 \mathbb{E}\left[\|g(z_k, \xi_k, \mathbf{e}_k)\|_q^2\right] +  2 \mathbb{E}\left[\|\tilde\nabla\hat{\varphi}(z_k)\|_q^2\right]\nonumber\\
    &\leq& 2 \mathbb{E}\left[\|g(z_k, \xi_k, \mathbf{e}_k)\|_q^2\right] +  2 \mathbb{E}\left[\left\|\frac{n\left(\varphi(z + \tau \mathbf{e}) -  \varphi(z - \tau \mathbf{e})\right)}{2\tau}\mathbf{e}\right\|_q^2\right]
    .  
\end{eqnarray*}
Using Lemma 1,  we have $\mathbb{E}\left[\|\Delta_k\|_q^2\right] \leq 4M^2_{all}$, whence
\begin{eqnarray*}
    \mathbb{E}\left[\max_{u \in \mathcal{Z}} D(u)\right] &\leq& 
    \frac{\Omega^2}{2\rho} + \frac{\Delta \Omega n a_q}{\tau}\sum\limits_{k=1}^N \gamma_k + 2\rho\sum\limits_{k=1}^N \gamma_k^2 M^2_{all}.  
\end{eqnarray*}
Taking $\rho = \nicefrac{1}{2}$ ends the proof.
\EndProof
\end{proof}

We are ready to prove the main theorem.
\begin{theorem} \label{th_main}
Let problem \eqref{problem} with function $\varphi(x,y)$ be solved using Algorithm \ref{alg} with the oracle $g(z_k, \xi_k, \mathbf{e}_k)$ from \eqref{eq:PrSt3}. Assume, that the function $\varphi(x,y)$ and its inexact modification $\widetilde{\varphi}(x,y)$ satisfy the conditions  \eqref{eq:PrSt1}, \eqref{bound}, \eqref{eq:PrSt2}. 
Denote by $N$ the number of iterations. Let step in Algorithm \ref{alg} $\gamma_k = \frac{\Omega}{ M_{all} \sqrt{N}}$.
Then the rate of convergence is given by the following expression
\begin{eqnarray*}
    \mathbb{E}\left[\varepsilon_{sad}(\bar z_N)\right] &\leq&
    \frac{3 M_{all}\Omega}{\sqrt{N}} + \frac{\Delta \Omega n a_q}{\tau}+ 2\tau M,
\end{eqnarray*}
where $\bar z_N$ is defined in \eqref{answer},
$\Omega$ is a diameter of $\mathcal{Z}$, $M^2_{all} = 2\left(cn M^2 +\frac{n^2\Delta^2}{\tau^2}\right)a^2_q$ and 
\begin{equation}
    \label{sad}
    \varepsilon_{sad}(\bar z_N) = \max_{y' \in \mathcal{Y}} \varphi(\bar x_N, y') - \min_{x' \in \mathcal{X}} \varphi(x', \bar y_N), 
\end{equation}
$\bar x_N$, $\bar y_N$ are defined the same way as $\bar z_N$ in \eqref{answer}.
\end{theorem}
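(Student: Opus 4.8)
The plan is to carry out the standard mirror-descent analysis for saddle points (\cite{nemirovski}, Lemma 5.3.2) on the \emph{smoothed} function $\hat\varphi$, and to pay for the smoothing only at the very end via Lemma \ref{lemma1}. Throughout, abbreviate $g_k \eqdef g(z_k,\xi_k,\mathbf{e}_k)$ and $z_k=(x_k,y_k)$.

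First I would start from the prox-step $z_{k+1}=\text{prox}_{z_k}(\gamma_k g_k)$, use its optimality condition together with the three-point identity for the Bregman divergence (as in the proof of Lemma \ref{lemma3}) and the strong-convexity estimate $2V_x(y)\ge\|x-y\|_p^2$, and absorb the cross term $\gamma_k\|g_k\|_q\|z_k-z_{k+1}\|_p$ by Young's inequality, to get for every $u\in\mathcal Z$
\begin{equation*}
\gamma_k\langle g_k, z_k-u\rangle \le V_{z_k}(u)-V_{z_{k+1}}(u) + \frac{\gamma_k^2}{2}\|g_k\|_q^2 .
\end{equation*}
Summing over $k=1,\dots,N$, telescoping the Bregman terms, and using $V_{z_1}(u)\le\Omega^2/2$, $V_{z_{N+1}}(u)\ge 0$, then writing $g_k=\tilde\nabla\hat\varphi(z_k)+\Delta_k$ with $\Delta_k$ as in Lemma \ref{lemma3}, gives
\begin{equation*}
\sum_{k=1}^N\gamma_k\langle\tilde\nabla\hat\varphi(z_k), z_k-u\rangle \le \frac{\Omega^2}{2} + \frac{1}{2}\sum_{k=1}^N\gamma_k^2\|g_k\|_q^2 + D(u) .
\end{equation*}
Taking $\max_{u\in\mathcal Z}$ (the first two terms being $u$-free) and then full expectation, and invoking Lemma \ref{beznos} for $\mathbb E\|g_k\|_q^2\le M_{all}^2$ and Lemma \ref{lemma3} for $\mathbb E[\max_u D(u)]$, yields
\begin{equation*}
\mathbb E\Big[\max_{u\in\mathcal Z}\sum_{k=1}^N\gamma_k\langle\tilde\nabla\hat\varphi(z_k), z_k-u\rangle\Big] \le \frac{3\Omega^2}{2} + \frac{3}{2}M_{all}^2\sum_{k=1}^N\gamma_k^2 + \frac{\Delta\Omega n a_q}{\tau}\sum_{k=1}^N\gamma_k .
\end{equation*}

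Next I would turn the left-hand side into the saddle-point gap of $\hat\varphi$. Since $\tilde\nabla\hat\varphi=(\nabla_x\hat\varphi,-\nabla_y\hat\varphi)$, convexity of $\hat\varphi(\cdot,y_k)$ and concavity of $\hat\varphi(x_k,\cdot)$ (Lemma \ref{lemma1}) give $\hat\varphi(x_k,y)-\hat\varphi(x,y_k)\le\langle\tilde\nabla\hat\varphi(z_k), z_k-u\rangle$ for $u=(x,y)$; summing with weights $\gamma_k$, dividing by $\Gamma_N$, and using Jensen's inequality on $\bar x_N$ (convexity in $x$) and on $\bar y_N$ (concavity in $y$) gives $\Gamma_N\big(\hat\varphi(\bar x_N,y)-\hat\varphi(x,\bar y_N)\big)\le\sum_k\gamma_k\langle\tilde\nabla\hat\varphi(z_k), z_k-u\rangle$. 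Maximizing over $(x,y)$, taking expectation and dividing by $\Gamma_N$ bounds $\mathbb E[\varepsilon_{sad}^{\hat\varphi}(\bar z_N)]$ (the analogue of \eqref{sad} for $\hat\varphi$). Lemma \ref{lemma1}, which controls $|\hat\varphi-\varphi|$ by $\tau M$ on $\mathcal Z$ and is applied once on the $\bar x_N$ side and once on the $\bar y_N$ side, then gives $\varepsilon_{sad}(\bar z_N)\le\varepsilon_{sad}^{\hat\varphi}(\bar z_N)+2\tau M$. Finally, plugging in the constant step $\gamma_k=\Omega/(M_{all}\sqrt N)$, so that $\Gamma_N=\Omega\sqrt N/M_{all}$ and $\sum_k\gamma_k^2=\Omega^2/M_{all}^2$, collapses the first two terms into $3M_{all}\Omega/\sqrt N$ and leaves $\Delta\Omega n a_q/\tau$ untouched, which is exactly the claimed inequality.

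The mirror-descent recursion is routine; the one genuinely delicate point has already been isolated in Lemma \ref{lemma3}: the oracle is \emph{biased}, $\mathbb E_{\mathbf e}[g(z_k,\mathbf{e}_k)]\neq\tilde\nabla\hat\varphi(z_k)$ because of the deterministic noise $\delta$. Lemma \ref{lemma3} handles this by introducing the auxiliary (``ghost'') sequence $v_k$ so that the zero-mean stochastic component $\tilde\Delta_k=g_k-g(z_k,\mathbf{e}_k)$ drops out under expectation (as $v_k-z_k$ is independent of $\mathbf{e}_k,\xi_k$ and $\mathbb E_{\xi_k}\tilde\Delta_k=0$), while the residual bias is charged separately via \eqref{temp986} of Lemma \ref{lemmasham}, producing the $\Theta(\Delta n a_q/\tau)$ term. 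Given Lemmas \ref{beznos}, \ref{lemma1}, \ref{lemmasham} and \ref{lemma3}, the theorem is essentially bookkeeping; the only things to watch are that it is $\Omega^2$ (not $\Omega^2/2$) that comes out of Lemma \ref{lemma3} after its choice $\rho=1/2$, and that the smoothing is paid for twice, yielding $2\tau M$ rather than $\tau M$.
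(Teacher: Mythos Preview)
Your proof is correct and follows essentially the same route as the paper: the paper also splits the argument into (i) the one-step mirror-descent inequality summed and telescoped to get \eqref{temp_main}, (ii) the reduction of $\varepsilon_{sad}$ to the weighted sum $\sum_k\gamma_k\langle\tilde\nabla\hat\varphi(z_k),z_k-u\rangle$ via Lemma~\ref{lemma1} and convexity--concavity of $\hat\varphi$, and (iii) the combination using Lemmas~\ref{beznos} and~\ref{lemma3} followed by the substitution $\gamma_k=\Omega/(M_{all}\sqrt N)$. The only cosmetic difference is ordering: the paper first bounds $\varepsilon_{sad}$ by the inner-product sum and then takes $\max_u$ and expectation, whereas you take $\max_u$ and expectation first and pass to $\varepsilon_{sad}$ afterwards; either way the same three terms $\tfrac{3\Omega^2}{2}$, $\tfrac{3}{2}M_{all}^2\sum_k\gamma_k^2$, and $\tfrac{\Delta\Omega n a_q}{\tau}\Gamma_N$ appear before dividing by $\Gamma_N$.
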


\begin{proof} We divided the proof into three steps.

\textbf{Step 1.} Let $g_k \eqdef \gamma_k g(z_k, \xi_k, \mathbf{e}_k)$. By the step of Algorithm \ref{alg}, $z_{k + 1}= \text{prox}_{z_k}(g_k)$. Taking into account \eqref{lemma3_1}, we get that for all $u \in \mathcal{Z}$
\begin{eqnarray*}
    \langle g_k , z_{k+1} - u\rangle = \langle g_k , z_{k+1} - z_{k} + z_{k} - u\rangle \leq V_{z_k}(u) - V_{z_{k + 1}}(u) - V_{z_k}(z_{k + 1}).
\end{eqnarray*}
By simple transformations:
\begin{eqnarray*}
    \langle g_k , z_{k} - u\rangle &\leq& \langle g_k , z_{k} - z_{k+1}\rangle + V_{z_k}(u) - V_{z_{k + 1}}(u) - V_{z_k}(z_{k + 1}) \nonumber\\
    &\leq& \langle g_k , z_{k} - z_{k+1}\rangle + V_{z_k}(u) - V_{z_{k + 1}}(u) - \frac{1}{2}\|z_{k + 1} - z_{k}\|^2_p. 
\end{eqnarray*}
In last inequality we use the property of the Bregman divergence: $V_x(y) \ge \frac{1}{2}\|x-y\|_p^2$.
Using Hölder's inequality and the fact: $ab - \nicefrac{b^2}{2} \leqslant\nicefrac{a^2}{2}$, we have
\begin{eqnarray}
    \label{temp2}
    \langle g_k , z_{k} - u\rangle &\leq&
    \| g_k \|_q\|z_{k} - z_{k+1}\|_p  + V_{z_k}(u) - V_{z_{k + 1}}(u) - \frac{1}{2}\|z_{k + 1} - z_{k}\|^2_p  \nonumber\\
    &\leq&
    V_{z_k}(u) - V_{z_{k + 1}}(u) + \frac{1}{2}\| g_k \|^2_q.
\end{eqnarray}
Summing \eqref{temp2} over all $k$ from 1 to $N$ and by the definitions of $g_k$ and $\Omega$ (diameter of $\mathcal{Z}$):
\begin{equation}
    \label{temp3}
    \sum^N_{k = 1} \gamma_k \langle g(z_k, \xi_k, \mathbf{e}_k) , z_{k} - u\rangle \leq\frac{\Omega^2}{2} + \frac{1}{2}\sum^N_{k = 1} \gamma^2_k \| g(z_k, \xi_k, \mathbf{e}_k) \|^2_q, \quad \forall u \in \mathcal{Z}.
\end{equation}
Substituting the definition of $D(u)$ from\eqref{temp3}, we have for all $u \in \mathcal{Z}$
\begin{eqnarray}
    \label{temp_main}
    \sum^N_{k = 1} \gamma_k \langle \tilde\nabla \hat{\varphi}(z_k) , z_{k} - u\rangle &\leq&\frac{\Omega^2}{2} + \frac{1}{2}\sum^N_{k = 1} \gamma^2_k \| g(z_k, \xi_k, \mathbf{e}_k) \|^2_q + D(u) .
\end{eqnarray}
By $\tilde \nabla \hat{\varphi}(z)$ we mean a block vector consisting of two vectors $\nabla_x \hat{\varphi}(x,y)$ and $-\nabla_y \hat{\varphi}(x,y)$.

\textbf{Step 2.} 
We consider a relationship between functions $\hat{\varphi}(z)$ and $\varphi(z)$. 
Combining \eqref{sad} and \eqref{lemma1_0} we get
\begin{eqnarray*}
    \varepsilon_{sad}(\bar z_N) 
    &\le& \max\limits_{y' \in \mathcal{Y}} \hat{\varphi}(\bar x_N, y') - \min\limits_{x' \in \mathcal{X}} \hat{\varphi}(x', \bar y_N) + 2\tau M. 
\end{eqnarray*}
Then, by the definition of $\bar x_N$ and $\bar y_N$ (see \eqref{sad}), Jensen's inequality and convexity-concavity of $\hat{\varphi}$:
\begin{eqnarray*}
    \varepsilon_{sad}(\bar z_N)
    &\leq& \max\limits_{y' \in \mathcal{Y}} \hat{\varphi}\left(\frac{1}{\Gamma_N} \left(\sum^N_{k = 1} \gamma_k x_k \right), y'\right) - \min\limits_{x' \in \mathcal{X}} \hat{\varphi}\left(x', \frac{1}{\Gamma_N} \left(\sum^N_{k = 1} \gamma_k y_k \right)\right) \nonumber \\
    &&+ 2\tau M
    \nonumber \\
    &\leq& \max\limits_{y' \in \mathcal{Y}} \frac{1}{\Gamma_N} \sum^N_{k = 1} \gamma_k \hat{\varphi}(x_k, y')  - \min\limits_{x' \in \mathcal{X}} \frac{1}{\Gamma_N} \sum^N_{k = 1} \gamma_k\hat{\varphi}(x', y_k) + 2\tau M .
\end{eqnarray*}
Given the fact of linear independence of $x'$ and $y'$:
\begin{eqnarray*}
    \varepsilon_{sad}(\bar z_N) &\leq& \max\limits_{(x', y') \in \mathcal{Z}}\frac{1}{\Gamma_N} \sum^N_{k = 1} \gamma_k \left(\hat{\varphi}(x_k, y')  - \hat{\varphi}(x', y_k) \right) + 2\tau M .
\end{eqnarray*}
Using convexity and concavity of the function $\hat{\varphi}$:
\begin{eqnarray}
\label{temp8}
    \varepsilon_{sad}(\bar z_N) &\leq&  \max\limits_{(x', y') \in \mathcal{Z}}\frac{1}{\Gamma_N} \sum^N_{k = 1} \gamma_k \left(\hat{\varphi}(x_k, y')  - \hat{\varphi}(x', y_k) \right) + 2\tau M  \nonumber \\
    &= & \max\limits_{(x', y') \in \mathcal{Z}} \frac{1}{\Gamma_N} \sum^N_{k = 1} \gamma_k \left(\hat{\varphi}(x_k, y') - \hat{\varphi}(x_k, y_k) + \hat{\varphi}(x_k, y_k) - \hat{\varphi}(x', y_k) \right) \nonumber \\
    &&+ 2\tau M  \nonumber \\
    &\leq& \max\limits_{(x', y') \in \mathcal{Z}} \frac{1}{\Gamma_N} \sum^N_{k = 1} \gamma_k \left(\langle \nabla_y \hat{\varphi} (x_k, y_k), y'-y_k \rangle + \langle \nabla_x \hat{\varphi} (x_k, y_k), x_k-x' \rangle \right) \nonumber \\
    &&+ 2\tau M\nonumber \\
    &\leq& \max\limits_{u \in \mathcal{Z}} \frac{1}{\Gamma_N} \sum^N_{k = 1} \gamma_k \langle \tilde \nabla \hat{\varphi}(z_k), z_k - u\rangle  + 2\tau M.
\end{eqnarray}

\textbf{Step 3.}
Combining expressions \eqref{temp_main} and \eqref{temp8}, we get
\begin{eqnarray*}
    \varepsilon_{sad}(\bar z_N)
    &\leq& \frac{\Omega^2}{2\Gamma_N} + \frac{1}{2\Gamma_N}\sum^N_{k = 1} \gamma^2_k \mathbb{E}\left[\| g(z_k, \xi_k, \mathbf{e}_k) \|^2_q\right] + \frac{1}{\Gamma_N}\max_{u \in \mathcal{Z}} D(u) \nonumber \\ 
    && + 2\tau M.
\end{eqnarray*}
Taking full mathematical expectation and using \eqref{19} and \eqref{boundlem0} , we have
\begin{eqnarray*}
    \mathbb{E}\left[\varepsilon_{sad}(\bar z_N)\right] &\leq&
    \frac{3\Omega^2}{2 \Gamma_N} + \frac{3M^2_{all}}{2\Gamma_N}\sum^N_{k = 1} \gamma^2_k  + \frac{\Delta \Omega n a_q}{\tau} + 2\tau M .
\end{eqnarray*} 
Substituting values of $\gamma_k = \frac{\Omega}{M_{all} \sqrt{N}}$ ends the proof.
\EndProof
\end{proof}

Next we analyze the results.
\begin{corollary} Under the assumptions of the Theorem 1 let $\varepsilon$ be accuracy of the solution of the problem \eqref{problem} obtained using Algorithm \ref{alg}. Assume that
\begin{eqnarray}
    \label{temp1209}
    \tau = \Theta \left( \frac{\varepsilon}{M}\right),\quad \Delta = \mathcal{O} \left(\frac{\varepsilon^2 }{M \Omega n a_q}\right),
\end{eqnarray}
then the number of iterations to find $\varepsilon$-solution
\begin{eqnarray*}
    N = \mathcal{O} \left( \frac{\Omega^2 M^2 n^{2/q}}{\varepsilon^2}C^2(n,q)\right),
\end{eqnarray*}
where $C(n,q) \eqdef \min\{2q - 1, 32 \log n - 8\}$.
\end{corollary}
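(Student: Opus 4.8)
The plan is to start directly from the convergence bound established in Theorem~\ref{th_main}, namely
\begin{eqnarray*}
\mathbb{E}\left[\varepsilon_{sad}(\bar z_N)\right] \;\le\; \frac{3 M_{all}\Omega}{\sqrt{N}} + \frac{\Delta \Omega n a_q}{\tau} + 2\tau M,
\end{eqnarray*}
and to force each of the three summands to be $\mathcal{O}(\varepsilon)$ by a sequence of (essentially forced) choices. First I would handle the last term: $2\tau M = \mathcal{O}(\varepsilon)$ already dictates $\tau = \Theta(\varepsilon/M)$, which is exactly the first condition in \eqref{temp1209}. Substituting this $\tau$ into the middle term turns $\frac{\Delta\Omega n a_q}{\tau}$ into $\frac{\Delta\Omega n a_q M}{\varepsilon}$, and requiring this to be $\mathcal{O}(\varepsilon)$ yields $\Delta = \mathcal{O}\!\left(\frac{\varepsilon^2}{M\Omega n a_q}\right)$, the second condition in \eqref{temp1209}. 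At this point the first term is the only one still depending on $N$, and the requirement $\frac{3M_{all}\Omega}{\sqrt N} = \mathcal{O}(\varepsilon)$ translates into $N = \mathcal{O}\!\left(\frac{M_{all}^2\Omega^2}{\varepsilon^2}\right)$.

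The remaining work is to unfold $M_{all}^2 = 2\left(cnM^2 + \frac{n^2\Delta^2}{\tau^2}\right)a_q^2$ under these choices. Plugging in $\tau = \Theta(\varepsilon/M)$ and $\Delta = \mathcal{O}(\varepsilon^2/(M\Omega n a_q))$ gives $\frac{n^2\Delta^2}{\tau^2} = \mathcal{O}\!\left(\frac{\varepsilon^2}{\Omega^2 a_q^2}\right)$, so that the deterministic-noise contribution to $M_{all}^2$ is only $\mathcal{O}(\varepsilon^2/\Omega^2)$; when carried into $N = \mathcal{O}(M_{all}^2\Omega^2/\varepsilon^2)$ this piece contributes just $\mathcal{O}(1)$ and is absorbed. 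Hence the dominant term is $M_{all}^2 = \mathcal{O}(nM^2 a_q^2)$, and $N = \mathcal{O}\!\left(\frac{nM^2 a_q^2\Omega^2}{\varepsilon^2}\right)$.

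Finally I would substitute the explicit value $a_q^2 = C(n,q)\, n^{2/q-1}$ from \eqref{eq:condition_on_u}, where $C(n,q) = \min\{2q-1,\,32\log n - 8\}$, so that $n a_q^2 = C(n,q)\, n^{2/q}$. This gives $N = \mathcal{O}\!\left(\frac{\Omega^2 M^2 n^{2/q} C(n,q)}{\varepsilon^2}\right)$, and since $C(n,q)\ge 1$ for all $n\ge 3$ and $q\ge 2$, we may weaken $C(n,q)$ to $C^2(n,q)$ to obtain the stated form. I do not expect any genuine obstacle here — the proof is pure bookkeeping — the one point requiring a moment's care is checking that the $\frac{n^2\Delta^2}{\tau^2}$ term in $M_{all}^2$ does not dominate after the substitution of $\Delta$ and $\tau$, which is precisely what the asymptotics above confirm.
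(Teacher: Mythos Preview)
Your proposal is correct and follows essentially the same approach as the paper: the paper's own proof simply observes that under the stated choices of $\tau$ and $\Delta$ one has $M_{all}^2 = \Theta(nM^2 a_q^2) = \Theta(n^{2/q} M^2 C(n,q))$ and declares the result immediate. You are in fact more careful than the paper on one point --- your argument yields $C(n,q)$ rather than $C^2(n,q)$ (as does the paper's computation, strictly speaking), and you correctly note that the stated $C^2(n,q)$ is just a harmless weakening since $C(n,q)\ge 1$.
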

\begin{proof}
    From the conditions of the corollary it follows:
    \begin{eqnarray*}
    M^2_{all} = \Theta(n M^2a_q^2) = \Theta(n^{2/q} M^2 C(n,q)).
\end{eqnarray*}
This immediately implies the assertion of the corollary.
\EndProof
\end{proof}

Consider separately cases with $p = 1$ and $p = 2$.
\begin{table}[H]
    \centering
    \begin{tabular}{ |c | c | c |  }
    \hline
    $p$, ($1\leqslant p \leqslant 2$) & $q$,  ($2\leqslant q \leqslant \infty$) &   $N$, Number of iterations\\ \hline
    $p = 2$& $q = 2$ & $\mathcal{O}\left(\frac{\Omega^2 M^2}{\varepsilon^2}n\right)$\\\hline
    $p = 1$& $q = \infty$  & $\mathcal{O}\left(\frac{\Omega^2 M^2}{\varepsilon^2}\log^2(n)\right)$\\ \hline
\end{tabular}
    \caption{Summary of  convergence estimation for non-smooth case: $p = 2$ and $p = 1$.}
    \label{summary_estim}
\end{table}

Note that in the case with $p = 2$, we have that the number of iterations increases $n$ times compared with \cite{nemirovski}, and in the case with $p=1$ -- just $\log^2 n$ times.

\subsection{Admissible Set Analysis}\label{ASA}

As stated above, in works (see \cite{Shamir15}, \cite{duchi2013optimal}), where zeroth-order approximation (\eqref{eq:PrSt3}) is used instead of the "honest"{} gradient, it is important that the function is specified not only on an admissible set, but in a certain neighborhood of it. This is due to the fact that for any point $x$ belonging to the set, the point $x + \tau \mathbf{e}$ can be outside it.

But in some cases we cannot make such an assumption. The function and values of $x$ can have a real physical interpretation. For example, in the case of a probabilistic simplex, the values of $x$ are the distribution of resources or actions. The sum of the probabilities cannot be negative or greater than $1$. Moreover, due to implementation or other reasons, we can deal with an oracle that is clearly defined on an admissible set and nowhere else.

In this part of the paper, we outline an approach how to solve the problem raised above and how the quality of the solution changes from this.

Our approach can be briefly described as follows:
\begin{itemize}
    \item  Compress our original set $X$ by $(1-\alpha)$ times and consider a "reduced"{} version $X^{\alpha}$. Note that the parameter $\alpha$ should not be too small, otherwise the parameter $\tau$ must be taken very small. But it’s also impossible to take large $\alpha$, because we compress our set too much and can get a solution far from optimal. This means that the accuracy of the solution $\varepsilon$ bounds $\alpha$: $\alpha \leq h(\varepsilon)$, in turn, $\alpha$ bounds $\tau$: $\tau \leq g(\alpha)$.
    \item Generate a random direction $\mathbf{e}$ so that for any $x \in X^{\alpha}$ follows $x + \tau \mathbf{e} \in X$. 
    \item Solve the problem on "reduced"{} set with $\nicefrac{\varepsilon}{2}$-accuracy. The $\alpha$ parameter must be selected so that we find $\varepsilon$-solution of the original problem.
\end{itemize}

In practice, this can be implemented as follows: 1) do as described in the previous paragraph, or 2) work on the original set $X$, but if $x_k + \tau \mathbf{e}$ is outside $X$, then project $x_k$ onto the set $X^\alpha$. We provide a theoretical analysis only for the method that always works on $X^\alpha$.

Next, we analyze cases of different sets. General analysis scheme:
\begin{itemize}
    \item Present a way to "reduce"{} the original set.
    \item Suggest a random direction $\mathbf{e}$ generation strategy.
    \item Estimate the minimum distance between $X^\alpha$ and $X$ in $\ell_2$-norm. This is the border of $\tau$, since $\|\mathbf{e}\|_2$.
    \item Evaluate the $\alpha$ parameter so that the $\nicefrac{\varepsilon}{2}$-solution of the "reduced"{} problem does not differ by more than $\nicefrac{\varepsilon}{2}$ from the $\varepsilon$-solution of the original problem.
\end{itemize}

The first case of set is a \textbf{probability simplex}:

\begin{eqnarray*}
\triangle_n = \left\{\sum\limits_{i=1}^n x_i = 1, \quad  x_i \geq 0, \quad i \in 1\ldots n\right\}.
\end{eqnarray*}
Consider the hyperplane
\begin{eqnarray*}
\mathcal{H} = \left\{\sum\limits_{i=1}^n x_i = 1\right\},
\end{eqnarray*}
in which the simplex lies. Note that if we take the directions $\mathbf{e}$ that lies in $\mathcal{H}$, then for any $x$ lying on this hyperplane, $x + \tau \mathbf{e}$ will also lie on it. Therefore, we generate the direction $\mathbf{e}$ randomly on the hyperplane. Note that $\mathcal{H}$ is a subspace of $\mathbb{R}^n$ with size $\text{dim}\mathcal{H} = n-1$. One can check that the set of vectors from $\mathbf{R}^n$
\begin{eqnarray*}
\mathbf{v} = \left(
\begin{array}{l}
    \mathbf{v}_1 = \nicefrac{1}{\sqrt{2}}(1,-1,0,0,\ldots0),\\
    \mathbf{v}_2 = \nicefrac{1}{\sqrt{6}}(1,1,-2,0,\ldots0),\\
    \mathbf{v}_3 = \nicefrac{1}{\sqrt{12}}(1,1,1,-3,\ldots0),\\
    \ldots \\
    \mathbf{v}_k = \nicefrac{1}{\sqrt{k + k^2}} (1,\ldots1,-k,\ldots,0),\\
    \ldots\\
    \mathbf{v}_{n-1} = \nicefrac{1}{\sqrt{n-1 + (n-1)^2}} (1,\ldots,1,-n+1)
\end{array}
\right),
\end{eqnarray*}
is an orthonormal basis of $\mathcal{H}$. Then generating the vectors $ \mathbf{\tilde e}$ uniformly on the euclidean sphere $\mathcal{RS}^{n-1}_2(1)$ and computing $ \mathbf{e}$ by the following formula:
\begin{eqnarray}
\label{e}
\mathbf{e} =  \mathbf{\tilde e}_1 \mathbf{v}_1 + \mathbf{\tilde e}_2 \mathbf{v}_2 + \ldots + \mathbf{\tilde e}_k \mathbf{v}_k + \ldots  \mathbf{\tilde e}_{n-1} \mathbf{v}_{n-1},
\end{eqnarray}
we have what is required. With such a vector $\mathbf{e}$, we always remain on the hyperplane, but we can go beyond the simplex. This happens if and only if for some $i$, $x_i + \tau \mathbf{e}_i <0$. To avoid this, we consider a "reduced"{} simplex for some positive constant $\alpha$:
\begin{eqnarray*}
\triangle^\alpha_n = \left\{\sum\limits_{i=1}^n x_i = 1, \quad  x_i \geq \alpha, \quad i \in 1\ldots n\right\}.
\end{eqnarray*}
One can see that for any $x \in \triangle^\alpha_n$, for any $\mathbf{e}$ from \eqref{e} and $\tau < \alpha$ follows that $x+\tau \mathbf{e} \in \triangle_n$, because $|\mathbf{e}_i| \leq 1$ and then $x_i + \tau \mathbf{e}_i \geq \alpha - \tau \geq 0$.

The last question to be discussed is the accuracy of the solution that we obtain on a "reduced"{} set. Consider the following lemma (this lemma does not apply to the problem \eqref{problem},for it we prove later):

\begin{lemma}\label{lemm_clos}
    Suppose the function $f(x)$ is $M$-Lipschitz w.r.t. norm $\|\cdot\|_2$. Consider the problem of minimizing $f(x)$ not on original set $X$, but on the "reduced"{} set $X_\alpha$. Let we find $x_k$ solution with $\nicefrac{\varepsilon}{2}$-accuracy on $f(x)$. Then we found $\left(\nicefrac{\varepsilon}{2} + r M\right)$-solution of original problem, where
    \begin{eqnarray*}
    r = \max_{x \in X} \left\|x - \argmin_{\hat x \in X^{\alpha}} \|x-\hat x\|_2\right\|_2.
    \end{eqnarray*}
\end{lemma}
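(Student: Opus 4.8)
The plan is to sandwich the value $f(x_k)$ between the true optimum $x^\ast \eqdef \argmin_{x\in X} f(x)$ and the optimum of the reduced problem $x^\ast_\alpha \eqdef \argmin_{x\in X^\alpha} f(x)$. By hypothesis $x_k$ is an $\nicefrac{\varepsilon}{2}$-solution on $X^\alpha$, i.e. $f(x_k) - f(x^\ast_\alpha) \le \nicefrac{\varepsilon}{2}$, so it suffices to control the ``price of compression'' $f(x^\ast_\alpha) - f(x^\ast)$ and show it is at most $rM$; adding the two estimates then gives the claim.

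To bound $f(x^\ast_\alpha) - f(x^\ast)$ I would introduce the Euclidean projection $\hat x \eqdef \argmin_{\hat x \in X^\alpha}\|x^\ast - \hat x\|_2$ of $x^\ast$ onto the reduced set (well-defined and lying in $X^\alpha$ since $X^\alpha$ is nonempty, closed and convex). Because $x^\ast \in X$, the definition of $r$ immediately gives $\|x^\ast - \hat x\|_2 \le r$. Since $\hat x \in X^\alpha$ while $x^\ast_\alpha$ minimizes $f$ over $X^\alpha$, we have $f(x^\ast_\alpha) \le f(\hat x)$, and then $M$-Lipschitzness of $f$ w.r.t. $\|\cdot\|_2$ yields $f(x^\ast_\alpha) - f(x^\ast) \le f(\hat x) - f(x^\ast) \le M\|\hat x - x^\ast\|_2 \le rM$.

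Combining, $f(x_k) - f(x^\ast) = \big(f(x_k) - f(x^\ast_\alpha)\big) + \big(f(x^\ast_\alpha) - f(x^\ast)\big) \le \nicefrac{\varepsilon}{2} + rM$, which is exactly the assertion that $x_k$ is a $\left(\nicefrac{\varepsilon}{2} + rM\right)$-solution of the original problem. There is essentially no obstacle in the argument itself: it is a two-line sandwich once $\hat x$ is introduced, and the only point worth keeping in mind is that $X^\alpha \subseteq X$ forces $f(x^\ast_\alpha) \ge f(x^\ast)$, so the extra term $rM$ is nonnegative and the bound is meaningful. The genuinely delicate work lies not in this lemma but in the set-specific analysis that follows, where one must estimate $r$ for concrete feasible sets (e.g. the simplex) and couple $\alpha$, $\tau$ and $r$ so that the overall accuracy is controlled; that, however, is outside the scope of the present statement.
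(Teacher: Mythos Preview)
Your proposal is correct and follows essentially the same approach as the paper: introduce the projection $\hat x$ of the true minimizer $x^\ast$ onto $X^\alpha$, use optimality of the reduced minimizer to replace $f(\hat x)$ by $f(x^\ast_\alpha)$, and apply $M$-Lipschitzness to bound $f(\hat x)-f(x^\ast)$ by $Mr$. The paper's decomposition is written as $f(x_k)-f(\hat x)+f(\hat x)-f(x^\ast)$ rather than your $f(x_k)-f(x^\ast_\alpha)+f(x^\ast_\alpha)-f(x^\ast)$, but the two are trivially equivalent once one uses $f(x^\ast_\alpha)\le f(\hat x)$.
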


\begin{proof} Let $x^*$ is a solution of the original problem and $\hat x$ is a point of $X_\alpha$, closest to $x^*$. 
    We use the fact that $f (\tilde x) \leq f (\hat x)$ and $M$-Lipschitz continuity of $f$:
    \begin{eqnarray*}
    f(x_k) - f(x^*) &=& f(x_k) - f(\hat x) + f(\hat x) - f(x^*) \nonumber\\ 
    &\leq& f(x_k) - f(\tilde x) + f(\hat x) - f(x^*) \leq \frac{\varepsilon}{2} + M \|\hat x - x^* \|_2 \nonumber\\
    &\leq&  \frac{\varepsilon}{2} + M r.
    \end{eqnarray*}
\EndProof
\end{proof}
It is not necessary to search for the closest point to each $x$ and find $r$. It’s enough to find one that is "pretty"{} close and find some upper bound of $r$. Then it remains to find a rule, which each point $x$ from $X$ associated with some point $\hat x$ from $X_\alpha$ and estimate the maximum distance $\max_X \|\hat x - x\|_2 $.
For any simplex point, consider the following rule: 
\begin{equation*}
    \hat x_i = \frac{(x_i + 2\alpha)}{(1 + 2\alpha n)}, \qquad i = 1, \ldots n.
\end{equation*}
One can easy to see, that for $\alpha \leq \nicefrac{1}{2n}$:
\begin{equation*}
    \sum\limits_{i=1}^n\hat x_i = 1, \qquad \hat x_i \leq \alpha, \qquad i = 1, \ldots n.
\end{equation*}
It means that $\hat x \in X_\alpha$. The distance $\|\hat x - x\|_2 $:
\begin{equation*}
    \|\hat x - x\|_2 = \sqrt{\sum\limits_{i=1}^n (\hat x_i - x_i)^2} = \frac{2 \alpha n}{1 + 2\alpha n} \sqrt{\sum\limits_{i=1}^n \left(\frac{1}{n} - x_i\right)^2}.
\end{equation*}
$\sqrt{\sum\limits_{i=1}^n \left(\frac{1}{n} - x_i\right)^2}$  is a distance to the center of the simplex. It can be bounded by the radius of the circumscribed sphere $R = \sqrt{\frac{n-1}{n}} \leq 1$. Then 
\begin{equation}
    \label{temp109}
    \|\hat x - x\|_2 \leq \frac{2 \alpha n}{1 + 2\alpha n} \leq 2 \alpha n.
\end{equation}
\eqref{temp109} together with Lemma \ref{lemm_clos} gives that $f(x_k) - f(x^*) \leq \frac{\varepsilon}{2} + 2\alpha nM$. Then by taking $\alpha = \nicefrac{\varepsilon}{4nM}$ (or less), we find $\varepsilon$-solution of the original problem. And it takes $\tau \leq \alpha = \nicefrac{\varepsilon}{4nM}$.

The second case is a \textbf{cube}:

\begin{eqnarray*}
\mathcal{C}_n = \left\{l_i\leq x_i \leq u_i, \quad i \in 1\ldots n\right\}.
\end{eqnarray*}
We propose to consider a "reduced"{} set of the following form:
\begin{eqnarray*}
\mathcal{C}^{\alpha}_n = \left\{l_i + \alpha\leq y_i \leq u_i - \alpha, \quad i \in 1\ldots n\right\}.
\end{eqnarray*}
One can note that for all $i$ the minimum of the expression  $y_i + \tau\mathbf{e}_i$ is equal to $l_i + \alpha - \tau$ (maximum -- $u_i - \alpha + \tau$), because $-1 \leq \mathbf{e}_i \leq 1$. Therefore, it is necessary that $l_i + \alpha - \tau \geq l_i$ and $u_i - \alpha  +\tau \leq u_i$. It means that for $\alpha \geq \tau$ and any $\mathbf{e} \in \mathcal{RS}^n_2(1)$, for the vector $y + \tau\mathbf{e}$ the following expression is valid:
\begin{eqnarray*}
l_i \leq y_i + \tau \mathbf{e}_i \leq u_i, \quad i \in 1\ldots n.
\end{eqnarray*}

Then let find $r$ in Lemma \ref{lemm_clos} for cube. Let for any $x \in \mathcal{C}_n$ define $\hat x$ in the following way:
\begin{equation*}
    \hat x_i = 
    \begin{cases}
    l_i + \alpha, \quad x_i < l_i + \alpha,\\
    x_i, \quad l_i + \alpha \leq  x_i \leq u_i - \alpha, \\
    u_i - \alpha, \quad x_i \geq u_i - \alpha,
    \end{cases} \qquad i = 1, \ldots n.
\end{equation*}
One can see that $\hat x_i \in \mathcal{C}^{\alpha}_n$ and
\begin{equation*}
    \|\hat x - x\|_2 = \sqrt{\sum\limits_{i=1}^n (\hat x_i - x_i)^2} \leq \sqrt{\sum\limits_{i=1}^n \alpha^2} = \alpha \sqrt{n}.
\end{equation*}
By Lemma \ref{lemm_clos} we have that $f(x_k) - f(x^*) \leq \frac{\varepsilon}{2} + \alpha \sqrt{n}M$. Then by taking $\alpha = \nicefrac{\varepsilon}{2\sqrt{n}M}$ (or less), we find $\varepsilon$-solution of the original problem. And it takes $\tau \leq \alpha = \nicefrac{\varepsilon}{2\sqrt{n}M}$.

The third case is a \textbf{ball in $p$-norm} for $p \in [1;2]$:
\begin{eqnarray*}
\mathcal{B}^n_p(a, R) = \left\{\|x - a\|_p \leq R\right\},
\end{eqnarray*}
where $a$ is a center of ball, $R$ -- its radii. We propose reducing a ball and solving the problem on the "reduced"{} ball $\mathcal{B}^n_p(a, R(1 - \alpha))$. We need the following lemma:

\begin{lemma}
    Consider two concentric spheres in $p$ norm, where $p \in [1;2]$, $\alpha \in (0;1)$:
    \begin{eqnarray*}
    \mathcal{S}^n_p(a, R) = \left\{\|x - a\|_p = R\right\}, \quad \mathcal{S}^n_p(a, R(1-\alpha)) = \left\{\|y - a\|_p = R(1 - \alpha)\right\}.
    \end{eqnarray*}
    Then the minimum distance between these spheres in the second norm
    \begin{eqnarray*}
    m = \frac{\alpha R}{n^{\nicefrac{1}{p} -\nicefrac{1}{2}}}.
    \end{eqnarray*}
\end{lemma}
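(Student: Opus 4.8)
The plan is to translate so that $a=0$ (translation preserves all $\ell_2$-distances), and then to prove the two bounds $m\ge \alpha R/n^{1/p-1/2}$ and $m\le \alpha R/n^{1/p-1/2}$ separately. Since $\alpha\in(0;1)$, the inner sphere $\mathcal{S}^n_p(0,R(1-\alpha))$ is a genuine nonempty sphere of positive radius $R(1-\alpha)$, and both spheres are compact, so the infimum defining $m$ is actually attained.

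For the lower bound I would fix an arbitrary $x$ with $\|x\|_p=R$ and an arbitrary $y$ with $\|y\|_p=R(1-\alpha)$. The reverse triangle inequality for $\|\cdot\|_p$ gives $\|x-y\|_p\ge \|x\|_p-\|y\|_p=\alpha R$. Combining this with the standard norm comparison $\|z\|_p\le n^{1/p-1/2}\|z\|_2$, valid for every $z\in\mathbb{R}^n$ when $p\in[1;2]$ (it follows from Hölder's inequality, equivalently from the power-mean inequality applied to $|z_1|,\dots,|z_n|$), applied to $z=x-y$, yields $\|x-y\|_2\ge \|x-y\|_p/n^{1/p-1/2}\ge \alpha R/n^{1/p-1/2}$. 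Taking the infimum over admissible pairs $(x,y)$ gives $m\ge \alpha R/n^{1/p-1/2}$.

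For the matching upper bound I would exhibit an explicit pair attaining this value. Take $x=Rn^{-1/p}(1,1,\dots,1)$, so that $\|x\|_p=R$ (hence $x\in\mathcal{S}^n_p(0,R)$) and $\|x\|_2=Rn^{1/2-1/p}=R/n^{1/p-1/2}$; put $y=(1-\alpha)x$, which satisfies $\|y\|_p=(1-\alpha)R$, so $y\in\mathcal{S}^n_p(0,R(1-\alpha))$. Then $\|x-y\|_2=\alpha\|x\|_2=\alpha R/n^{1/p-1/2}$, so the minimum is attained and equals this quantity.

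The only point requiring care is the orientation of the norm inequality and the realizability of equality: the all-equal-coordinates vector is precisely the equality case of $\|z\|_p\le n^{1/p-1/2}\|z\|_2$, and it is simultaneously the minimizer of $\|x\|_2$ on the outer sphere, so the construction used for the upper bound is exactly the tight instance of the lower-bound chain (with $x$ and $y=(1-\alpha)x$ also saturating the reverse triangle inequality because $y$ is a positive multiple of $x$). Once this alignment is noticed, the remaining verifications are routine arithmetic.
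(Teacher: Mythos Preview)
Your proof is correct and takes a genuinely different route from the paper's. The paper sets up the constrained minimization $\min\|x-y\|_2$ subject to $\|x\|_p=R$, $\|y\|_p=R(1-\alpha)$, writes down the Lagrangian, solves the first-order KKT system to obtain $x_i=R/n^{1/p}$, $y_i=(1-\alpha)R/n^{1/p}$, and then verifies via the second differential that this critical point is indeed a minimum. Your argument bypasses all of this by combining the reverse triangle inequality $\|x-y\|_p\ge\alpha R$ with the norm comparison $\|z\|_p\le n^{1/p-1/2}\|z\|_2$ to get the lower bound in one line, and then exhibits the same pair $x=Rn^{-1/p}\mathbf{1}$, $y=(1-\alpha)x$ directly as the simultaneous equality case of both inequalities. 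The payoff is that your proof is shorter, avoids any calculus, and makes transparent \emph{why} the all-equal-coordinates point is optimal; the paper's Lagrange-multiplier computation obscures this structural reason and in addition has to worry about the restriction to the positive orthant and second-order conditions, which your approach sidesteps entirely.
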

\begin{proof} Without loss of generality, one can transfer the center of the spheres to zero, and also, by virtue of symmetry, consider only parts of the spheres, where all components are positive. Then rewriting the problem of finding the minimum distance we get
    \begin{eqnarray}
    \label{temp344}
    \min_{x,y \in \mathbb{R}^n_+} & &\|x-y\|_2 \\
    \text{s.t.} & &x \in \mathcal{S}^n_p(R, 0),  \nonumber\\
    &&y \in \mathcal{S}^n_p(R(1 -\alpha), 0). \nonumber
    \end{eqnarray}
Lagrange function of \eqref{temp344}:
\begin{eqnarray*}
    \label{temp345}
    L = \sum\limits_{i=1}^n (x_i - y_i)^2 + \lambda_1 \left(\sum\limits_{i=1}^n x_i^p - R^p\right) + \lambda_2 \left(\sum\limits_{i=1}^n y_i^p - (1 - \alpha)^pR^p\right).
\end{eqnarray*}
Note that we don not add restrictions for $x_i \geq 0$ and $y_i \geq 0$ into the Lagrange function.

Taking derivatives with respect to $x_i$ and $y_i$ and using the necessary conditions for the extremum point:
\begin{eqnarray}
    \label{temp346}
    \begin{cases}
   L_{x_i} = 2 (x_i - y_i) + \lambda_1 p x^{p-1}_i = 0,\\
   L_{y_i} = 2 (y_i - x_i) + \lambda_2 p y^{p-1}_i = 0,\\
   \sum\limits_{i=1}^n x_i^p - R^p = 0 ,\\
   \sum\limits_{i=1}^n y_i^p - (1 - \alpha)^p R^p = 0.
 \end{cases}
\end{eqnarray}
One can note that $x_i > y_i$, hence $\lambda_1 < 0$, $\lambda_2 > 0$.
From first two equations of \eqref{temp346}:
\begin{eqnarray}
\label{temp347}
   -\lambda_1 p x^{p-1}_i &=& \lambda_2 p y^{p-1}_i \\
   (-\lambda_1)^{\nicefrac{p}{p-1}}  x^{p}_i&=& \lambda_2^{\nicefrac{p}{p-1}}  y^{p}_i \nonumber\\
   (-\lambda_1)^{\nicefrac{p}{p-1}} \sum\limits_{i=1}^n x^{p}_i &=& \lambda_2^{\nicefrac{p}{p-1}} \sum\limits_{i=1}^n  y^{p}_i \nonumber\\
   (-\lambda_1)^{\nicefrac{p}{p-1}} R^p &=& \lambda_2^{\nicefrac{p}{p-1}} (1 - \alpha)^p R^p \nonumber\\
   -\lambda_1 &=& \lambda_2 (1 - \alpha)^{p-1}.
   \label{temp348}
\end{eqnarray}
Combining \eqref{temp347} and \eqref{temp348}, we have
\begin{eqnarray*}
(1 - \alpha)x_i = y_i.
\end{eqnarray*}
Substituting $y_i$ from the first equation of \eqref{temp346} into the second equation of \eqref{temp346} and using \eqref{temp347},  we get
\begin{eqnarray*}
-\lambda_1 x_i^{p-1} &=& \lambda_2  \left(x_i + \frac{\lambda_1 p}{2}x_i^{p-1}\right)^{p-1} \nonumber\\
(1 - \alpha)^{p-1} x_i^{p-1} &=& \left(x_i + \frac{\lambda_1 p}{2}x_i^{p-1}\right)^{p-1}\nonumber\\
(1 - \alpha) x_i &=& x_i + \frac{\lambda_1 p}{2}x_i^{p-1} \nonumber\\
 - \alpha &=& \frac{\lambda_1 p}{2}x_i^{p-2} \nonumber\\
  \left(\frac{-2\alpha}{\lambda_1 p}\right)^{\nicefrac{p}{p-2}} &=& x_i^{p} \nonumber\\
 n \left(\frac{-2\alpha}{\lambda_1 p}\right)^{\nicefrac{p}{p-2}} &=& R^{p} \nonumber\\
 \lambda_1 &=& \frac{-2 \alpha n^{\nicefrac{p-2}{p}}}{p R^{p-2}}.
\end{eqnarray*}
Then by \eqref{temp348}:
\begin{eqnarray*}
\label{temp351}
\lambda_2 = \frac{2 \alpha n^{\nicefrac{p-2}{p}}}{(1 - \alpha)^{p-1} p R^{p-2}}.
\end{eqnarray*}
Substituting $\lambda_1$, $x_i - y_i = \alpha x_i$ into the first equation of \eqref{temp346}:
\begin{eqnarray*}
\label{temp352}
\alpha x_i = \frac{2 \alpha n^{\nicefrac{p-2}{p}} p}{2p R^{p-2}} x_i^{p-1}.
\end{eqnarray*}
Whence it follows that
\begin{eqnarray*}
\label{temp353}
x_i = \frac{R}{n^{\nicefrac{1}{p}}}, \quad y_i = \frac{(1 - \alpha)R}{n^{\nicefrac{1}{p}}}.
\end{eqnarray*}
The values we found are non-negative. Then it is very easy to get the value of $m$:
\begin{eqnarray*}
\label{temp354}
m = \sqrt{\sum\limits_{i=1}^n (x_i - y_i)^2} = \frac{\alpha R}{n^{\nicefrac{1}{p} - \nicefrac{1}{2}}}.
\end{eqnarray*}
It remains only to verify that the found value is a minimum. Taking into account that $x_i = x_j$, $y_i = y_j$ and $y_i = (1 - \alpha) x_i$, one can write $dx_i = dx_j$, $y_i = y_j$, $dy_i = (1 - \alpha) dx_i$ and find $d^2 L$:
\begin{eqnarray*}
\label{temp355}
d^2 L &=& \sum\limits_{i=1}^n L_{x_i x_i}(dx_i)^2 + 2\sum\limits_{i=1}^n L_{x_i y_i}dx_i dy_i  + \sum\limits_{i=1}^n L_{y_i y_i}(dy_i)^2 \\
&=& n L_{x_1 x_1}(dx_1)^2 + 2n L_{x_1 y_1}dx_i dy_i + n L_{y_1 y_1}(dy_1)^2\\
&=& n (L_{x_1 x_1} + 2(1 - \alpha) L_{x_1 y_1} + (1 - \alpha)^2 L_{y_1 y_1})  (dx_1)^2\\
&=& n \left(2 - 2\alpha(p-1) - 4 (1 - \alpha) + (1 - \alpha) (2 + 2\alpha(p-1))\right) (dx_1)^2\\
&=& 2n\alpha(1 - \alpha (p-1))(dx_1)^2.
\end{eqnarray*}
For $\alpha \in (0;1)$ and $p \in [1;2]$ we have $\alpha (p-1) \leq 0$, hence $d^2L \geq 0$. It means that we find a minimum of the distance.
\EndProof
\end{proof}
Using the lemma, one can see that for any $x \in \mathcal{B}^\alpha_n(a,R(1 - \alpha))$, $\tau \leq \nicefrac{\alpha R}{n^{\nicefrac{1}{p} - \nicefrac{1}{2}}}$ and for any $\mathbf{e} \in \mathcal{RS}^n_2(1)$, $x + \tau \mathbf{e} \in \mathcal{B}_n(a, R)$.

Then let find $r$ in Lemma \ref{lemm_clos} for ball. Let for any $x$ define $\hat x$ in the following way:
\begin{equation*}
    \hat x_i = a + (1 - \alpha)(x_i - a), \qquad i = 1, \ldots n.
\end{equation*}
One can see that $\hat x_i$ is in the "reduced"{} ball and
\begin{equation*}
    \|\hat x - x\|_2 = \sqrt{\sum\limits_{i=1}^n (\hat x_i - x_i)^2} = \sqrt{\sum\limits_{i=1}^n (\alpha (x_i - a))^2} = \alpha \sqrt{\sum\limits_{i=1}^n  (x_i-a)^2} \leq \alpha \sum\limits_{i=1}^n  |x_i-a|.
\end{equation*}
By Holder inequality:
\begin{equation*}
    \|\hat x - x\|_2 \leq \alpha \sum\limits_{i=1}^n  |x_i - a| \leq \alpha n^{\frac{1}{q}} \left(\sum\limits_{i=1}^n  |x_i - a|^p\right)^{\frac{1}{p}} = \alpha n^{\frac{1}{q}}R.
\end{equation*}

By Lemma \ref{lemm_clos} we have that $f(x_k) - f(x^*) \leq \frac{\varepsilon}{2} + \alpha n^{1/q}RM$. Then by taking $\alpha = \nicefrac{\varepsilon}{2n^{1/q}RM}$ (or less), we find $\varepsilon$-solution of the original problem. And it takes $\tau \leq \nicefrac{\alpha R}{n^{\nicefrac{1}{p} - \nicefrac{1}{2}}} = \nicefrac{\varepsilon}{2M \sqrt{n}}$.

The fourth case is a \textbf{product of sets $\mathcal{Z} = \mathcal{X} \times \mathcal{Y}$}. We define the "reduced"{} set $Z^{\alpha}$ as 
\begin{eqnarray*}
Z^\alpha = X^\alpha \times Y^\alpha,
\end{eqnarray*}
We need to find how the parameter $\alpha$ and $\tau$ depend on the parameters $\alpha_x$, $\tau_x$ and $\alpha_y$, $\tau_y$ for the corresponding sets $X$ and $Y$, i.e. we have bounds: $\alpha_x \leq h_x(\varepsilon)$, $\alpha_y \leq h_y(\varepsilon)$ and $\tau_x \leq g_x(\alpha_x) \leq g_x(h_x(\varepsilon))$, $\tau_y \leq g_y(\alpha_y) \leq g_y(h_y(\varepsilon))$. Obviously, the functions $g$, $h$ are monotonically increasing for positive arguments. This follows from the physical meaning of $\tau$ and $\alpha$.

Further we are ready to present an analogue of Lemma \ref{lemm_clos}, only for the saddle-point problem.

\begin{lemma}\label{lemm_clos1}
    Suppose the function $\varphi(x,y)$ inthe  saddle-point problem is $M$-Lipschitz. Let we find $(\tilde x, \tilde y)$ solution on $X^\alpha$ and $Y^\alpha$ with $\nicefrac{\varepsilon}{2}$-accuracy. Then we found $\left(\nicefrac{\varepsilon}{2} + (r_x + r_y) M\right)$-solution of the original problem, where $r_x$ and $r_y$ we define in the following way:
    \begin{eqnarray*}
    r_x = \max_{x \in X} \left\|x - \argmin_{\hat x \in X^{\alpha}} \|x-\hat x\|_2\right\|_2, \nonumber\\
    r_y = \max_{y \in Y} \left\|y - \argmin_{\hat y \in Y^{\alpha}} \|y-\hat y\|_2\right\|_2.
    \end{eqnarray*}
\end{lemma}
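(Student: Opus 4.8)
The plan is to imitate the proof of Lemma~\ref{lemm_clos}, but to treat the inner maximization over $\mathcal{Y}$ and the inner minimization over $\mathcal{X}$ separately and then add the two one-sided estimates. Write $\varepsilon_{sad}^{\alpha}(x,y) \eqdef \max_{y' \in Y^\alpha}\varphi(x,y') - \min_{x' \in X^\alpha}\varphi(x',y)$ for the saddle gap computed with respect to the reduced sets, so that the hypothesis ``$(\tilde x,\tilde y)$ is found with $\nicefrac{\varepsilon}{2}$-accuracy on $X^\alpha$, $Y^\alpha$'' means $\varepsilon_{sad}^{\alpha}(\tilde x,\tilde y) \le \nicefrac{\varepsilon}{2}$ (in expectation, if the solution is random, in which case one simply keeps $\mathbb{E}[\cdot]$ through the whole argument). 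By compactness of $\mathcal{X}$ and $\mathcal{Y}$ all the maxima, minima and nearest-point projections used below exist, and $M$-Lipschitzness of $\varphi$ in particular gives $M$-Lipschitzness of $\varphi(x,\cdot)$ for fixed $x$ and of $\varphi(\cdot,y)$ for fixed $y$ w.r.t.\ $\|\cdot\|_2$.

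First I would bound the $\max$-term. Take $y^* \in \argmax_{y' \in \mathcal{Y}}\varphi(\tilde x, y')$ and let $\hat y \in Y^\alpha$ attain $\min_{\hat y\in Y^\alpha}\|y^*-\hat y\|_2$; since $y^*\in\mathcal{Y}$, this distance is $\le r_y$. Using Lipschitzness in the second argument,
\[
\max_{y' \in \mathcal{Y}}\varphi(\tilde x,y') = \varphi(\tilde x,y^*) \le \varphi(\tilde x,\hat y) + M\|y^*-\hat y\|_2 \le \max_{y'\in Y^\alpha}\varphi(\tilde x,y') + M r_y .
\]
Symmetrically, with $x^* \in \argmin_{x'\in\mathcal{X}}\varphi(x',\tilde y)$ and $\hat x\in X^\alpha$ a nearest point to $x^*$ (so $\|x^*-\hat x\|_2\le r_x$),
\[
\min_{x'\in\mathcal{X}}\varphi(x',\tilde y) = \varphi(x^*,\tilde y) \ge \varphi(\hat x,\tilde y) - M\|x^*-\hat x\|_2 \ge \min_{x'\in X^\alpha}\varphi(x',\tilde y) - M r_x .
\]

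Subtracting the second display from the first yields
\[
\varepsilon_{sad}(\tilde x,\tilde y) = \max_{y'\in\mathcal{Y}}\varphi(\tilde x,y') - \min_{x'\in\mathcal{X}}\varphi(x',\tilde y) \le \varepsilon_{sad}^{\alpha}(\tilde x,\tilde y) + (r_x+r_y)M \le \frac{\varepsilon}{2} + (r_x+r_y)M ,
\]
which is exactly the claim. I do not expect a real obstacle: unlike the pure minimization case, no extra optimality inequality (the ``$f(\tilde x)\le f(\hat x)$'' step) is needed, because the reduced-set saddle gap already controls both inner problems simultaneously. The only points to get right are that one approximates the $\argmax$ over the \emph{larger} set $\mathcal{Y}$ (resp.\ the $\argmin$ over $\mathcal{X}$) by a nearby point of the \emph{reduced} set, and that the Lipschitz constant $M$ is spent once for each variable, which is precisely what produces the additive term $(r_x+r_y)M$.
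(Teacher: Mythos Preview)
Your proof is correct and follows essentially the same approach as the paper: both pick $y^*\in\argmax_{y\in Y}\varphi(\tilde x,y)$, $x^*\in\argmin_{x\in X}\varphi(x,\tilde y)$, project each to its nearest point $\hat y\in Y^\alpha$, $\hat x\in X^\alpha$, apply $M$-Lipschitzness to pay $Mr_y$ and $Mr_x$, and then bound $\varphi(\tilde x,\hat y)-\varphi(\hat x,\tilde y)$ by the reduced saddle gap. The paper merely writes this as one telescoping chain (adding and subtracting $\varphi(x_k,\hat y)$ and $\varphi(\hat x,y_k)$) instead of your two separate one-sided estimates, but the content is identical.
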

\begin{proof}
    Let $x^* = \argmin_{x \in X} \varphi(x,y_k)$, $y^* = \argmax_{y \in Y} \varphi(x_k,y)$, $\hat x \in X^{\alpha}$ is the closest point to $x^*$ and $\hat y \in Y^{\alpha}$ -- to $y^*$. We use $M$-Lipschitz continuity of $\varphi$:
    \begin{eqnarray*}
    \max_{y \in Y} \varphi(x_k,y) - \min_{x \in X} \varphi(x,y_k) &=& \varphi(x_k,y^*) - \varphi(x_k,\hat y) + \varphi(x_k,\hat y) \nonumber\\
    &&- \varphi(x^*,y_k) + \varphi(\hat x,y_k) - \varphi(\hat x,y_k) \nonumber\\
    &=& \varphi(x_k,\hat y) - \varphi(\hat x,y_k)\nonumber\\
    &&+ \varphi(x_k,y^*) - \varphi(x_k,\hat y)\nonumber\\
    &&+ \varphi(\hat x,y_k) -\varphi(x^*,y_k)\nonumber\\
    &\leq& \max_{y \in Y_\alpha}\varphi(x_k, y) - \min_{x \in X_\alpha}\varphi( x,y_k)\nonumber\\
    &&+ r_x M + r_y M \nonumber\\
    &\leq& \frac{\varepsilon}{2} + (r_x + r_y)M.
    \end{eqnarray*}
\EndProof
\end{proof}
In the previous cases we found the upper bound $\alpha_x \leq h_x(\varepsilon)$ from the condition that $r_xM \leq \nicefrac{\varepsilon}{2}$. Now let's take $\tilde \alpha_x$ and $\tilde \alpha_y$ so that $r_xM \leq \nicefrac{\varepsilon}{4}$ and $r_y M \leq \nicefrac{\varepsilon}{4}$. For this we need $\tilde \alpha_x \leq h_x(\varepsilon/2)$, $\tilde \alpha_y \leq h_y(\varepsilon/2)$. It means that if we take $\alpha = \min(\tilde \alpha_x, \tilde \alpha_y)$, then $(r_x + r_y)M \leq \nicefrac{\varepsilon}{2}$ for such $\alpha$. For a simplex, a cube and a ball the function $h$ is linear, therefore the formula turns into a simpler expression: $\alpha = \nicefrac{\min(\alpha_x, \alpha_y)}{2}$.

For the new parameter $\alpha = \min(\tilde \alpha_x, \tilde \alpha_y)$, we find $\tilde \tau_x = g_x(\alpha) = g_x(\min(\tilde \alpha_x, \tilde \alpha_y))$ and $\tilde \tau_y = g_y(\alpha) = g_y(\min(\tilde \alpha_x, \tilde \alpha_y))$. Then for any $x \in X^{\alpha}$, $\mathbf{e}_x \in \mathcal{RS}^{\text{dim}X}_2(1)$, $y \in Y^{\alpha}$, $\mathbf{e}_y \in \mathcal{RS}^{\text{dim}Y}_2(1)$, $x+\tilde \tau_x \mathbf{e}_x \in X$ and $y+\tilde \tau_y \mathbf{e}_y \in Y$. Hence, it is easy to see that for $\tau = \min(\tilde \tau_x, \tilde \tau_y)$ and the vector $\mathbf{\tilde e}_x$ of the first $\text{dim}X$ components of $\mathbf{e} \in \mathcal{RS}^{\text{dim}X + \text{dim}Y}_2(1)$ and for the vector $\mathbf{\tilde e}_y$ of the remaining $\text{dim}Y$ components, for any $x \in X^{\alpha}$, $y \in Y^{\alpha}$ it is true that $x + \tau \mathbf{\tilde e}_x \in X$ and $y + \tau \mathbf{\tilde e}_y \in Y$. We get $\tau = \min(\tilde \tau_x, \tilde \tau_y)$. In the previous cases that we analyzed (simplex, cube and ball), the function $g$ and $h$ are linear therefore the formula turns into a simpler expression: $\tau = \min(\alpha_x, \alpha_y) \cdot \min(\nicefrac{\tau_x}{\alpha_x}, \nicefrac{\tau_y}{\alpha_y})/2$.

Summarize the results of this part of the paper in Table \ref{summary_contr}.
\begin{table}[h!]
    \centering
    \begin{tabular}{ |c | c | c | c | c | }
    \hline
    Set &  $\alpha$ of "reduced"{} set & Bound of $\tau$ & $\mathbf{e}$
    \\ \hline
    \specialcell{probability\\ simplex} & $\frac{\varepsilon}{4nM}$ & $\frac{\varepsilon}{4nM}$ & see \eqref{e}
    \\ \hline
    \specialcell{cube} & $\frac{\varepsilon}{2\sqrt{n}M}$ & $\frac{\varepsilon}{2\sqrt{n}M}$ & $\mathcal{RS}^n_2(1)$
    \\ \hline
    \specialcell{ball in \\ $p$-norm} & $\frac{\varepsilon}{2n^{1/q}RM}$ & $\frac{\varepsilon}{2\sqrt{n}M}$ & $\mathcal{RS}^n_2(1)$
    \\ \hline
    \specialcell{$X^{\alpha} \times Y^{\alpha}$} & $\frac{\min(\alpha_x, \alpha_y)}{2}$ & $\frac{\min(\alpha_x, \alpha_y) \cdot \min(\nicefrac{\tau_x}{\alpha_x}, \nicefrac{\tau_y}{\alpha_y})}{2}$ & $\mathcal{RS}^n_2(1)$
    \\ \hline
\end{tabular}
    \caption{Summary of the part \ref{ASA}}
    \label{summary_contr}
\end{table}

One can note that in \eqref{temp1209} $\tau$ is independent of $n$. According to Table \ref{summary_contr}, we need to take into account the dependence on $n$. In Table \ref{summary_contr1}, we present the constraints on $\tau$ and $\Delta$ so that Corollary 1 remains satisfied. We consider three cases when both sets $X$ and $Y$ are simplexes, cubes and balls with the same dimension $n/2$. 

The second column of Table 3 means whether the functions are defined not only on the set itself, but also in some neighbourhood of it.

\begin{table}[h!]
    \centering
    \begin{tabular}{ |c | c | c | c | }
    \hline
    Set &  Neigh-d? & $\tau$ & $\Delta$ 
    \\ \hline
    \multirow{2}{*}{\specialcell{probability\\ simplex}} &  
    \cmark & $\Theta \left( \frac{\varepsilon}{M}\right)$ & $\mathcal{O} \left(\frac{\varepsilon^2 }{M \Omega n a_q}\right)$
    \\ \hhline{~---}
    & \xmark & $ \Theta \left( \frac{\varepsilon}{Mn}\right) $ and $\leq \frac{\varepsilon}{4nM}$ & $\mathcal{O} \left(\frac{\varepsilon^2 }{M \Omega n^2 a_q}\right)$
    \\ \hline
    
    \multirow{2}{*}{\specialcell{cube}} &  
    \cmark & $\Theta \left( \frac{\varepsilon}{M}\right)$ & $\mathcal{O} \left(\frac{\varepsilon^2 }{M \Omega n a_q}\right)$
    \\ \hhline{~---}
    & \xmark & $ \Theta \left( \frac{\varepsilon}{M\sqrt{n}}\right) $ and $\leq \frac{\varepsilon}{\sqrt{8n}M}$ & $\mathcal{O} \left(\frac{\varepsilon^2 }{M \Omega n^{3/2} a_q}\right)$
    \\ \hline
    
    \multirow{2}{*}{\specialcell{ball in \\ $p$-norm}} &  
    \cmark & $\Theta \left( \frac{\varepsilon}{M}\right)$ & $\mathcal{O} \left(\frac{\varepsilon^2 }{M \Omega n a_q}\right)$
    \\ \hhline{~---}
    & \xmark & $ \Theta \left( \frac{\varepsilon}{M\sqrt{n}}\right) $ and $\leq \frac{\varepsilon}{\sqrt{8n}M}$ & $\mathcal{O} \left(\frac{\varepsilon^2 }{M \Omega n^{3/2} a_q}\right)$
    \\ \hline
    
\end{tabular}
    \caption{$\tau$ and $\Delta$ in Corollary 1 in different cases}
    \label{summary_contr1}
\end{table}

\newpage

\section{Numerical Experiments}\label{sec:experiments}

In a series of our experiments, we compare zeroth-order Algorithm \ref{alg} ({\tt zoSPA}) proposed in this paper with Mirror-Descent algorithm from \cite{nemirovski} which uses a first-order oracle. In the main part of the paper we give only a part of the experiments, see the rest of the experiments in Section \ref{add_exp} of the Appendix.

We consider the classical saddle-point problem on a probability simplex:

\begin{eqnarray}
\label{exp_pr_4}
\min_{x\in \Delta_n}\max_{y\in \Delta_k} \left[  y^T Cx\right],
\end{eqnarray}

This problem has many different applications and interpretations, one of the main ones is a matrix game (see Part 5 in \cite{nemirovski}), i.e. the element $c_{ij}$ of the matrix are interpreted as a winning, provided that player $X$ has chosen the $i$th strategy and player $Y$ has chosen the $j$th strategy, the task of one of the players is to maximize the gain, and the opponent’s task -- to minimize.

We briefly describe how the step of algorithm should look for this case. The prox-function is $d(x) = \sum_{i=1}^n x_i \log x_i$ (entropy) and $V_x(y) = \sum_{i=1}^n x_i \log \nicefrac{x_i}{y_i}$ (KL  divergence). The result of the proximal operator is $u = \text{prox}_{z_k}(\gamma_k g(z_k, \xi^{\pm}_k,\mathbf{e}_k)) = z_k \exp(-\gamma_k g(z_k, \xi^{\pm}_k,\mathbf{e}_k))$, by this entry we mean: $u_i = [z_k]_i \exp(-\gamma_k [g(z_k, \xi^{\pm}_k,\mathbf{e}_k)]_i)$. Using the Bregman projection onto the simplex in following way $P(x) = \nicefrac{x}{\|x\|_1}$, we have
\begin{eqnarray*}
[x_{k+1}]_i = \frac{[x_k]_i \exp(-\gamma_k [g_x(z_k, \xi^{\pm}_k,\mathbf{e}_k)]_i)}{\sum\limits_{j=1}^n [x_k]_j \exp(-\gamma_k [g_x(z_k, \xi^{\pm}_k,\mathbf{e}_k)]_j)},
\end{eqnarray*}
\begin{eqnarray*}
[y_{k+1}]_i = \frac{[y_k]_i \exp(\gamma_k [g_y(z_k, \xi^{\pm}_k,\mathbf{e}_k)]_i)}{\sum\limits_{j=1}^n [y_k]_j \exp(\gamma_k [g_y(z_k, \xi^{\pm}_k,\mathbf{e}_k)]_j)},
\end{eqnarray*}
where under $g_x, g_y$ we mean parts of $g$ which are responsible for $x$ and for $y$. From theoretical results one can see that in our case, the same step must be used in Algorithm 1 and Mirror Descent from \cite{nemirovski}, because $n^{1 / q} = 1$ for $q =\infty$.

In the first part of the experiment, we take matrix $200 \times 200$. All elements of the matrix are generated from the uniform distribution from 0 to 1. Next, we select one row of the matrix and generate its elements from the uniform from 5 to 10. Finally, we take one element from this row and generate it uniformly from 1 to 5. Then we take the same matrix, but now at each iteration we add to elements of the matrix a normal noise with zero expectation and variance of 10, 20, 30, 40 \% of the value of the matrix element. The results of the experiment is on Figure \ref{fig:3}. 

\begin{figure}[h!]
\centering
\begin{minipage}{0.95\textwidth}
\includegraphics[width =  \textwidth]{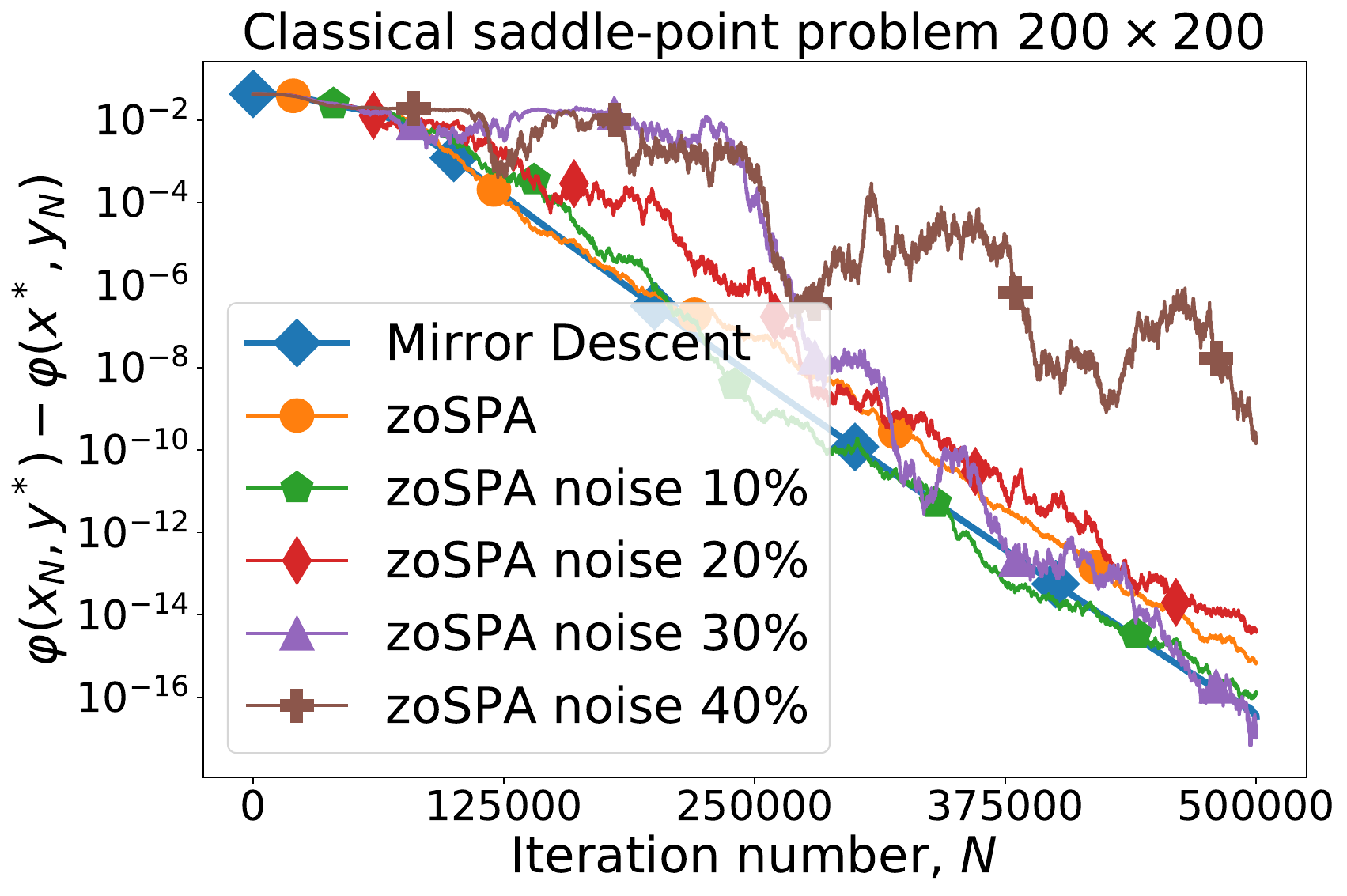}
\end{minipage}%
\caption{{\tt zoSPA} with 0 - 40 \% noise and {\tt Mirror Descent} applied to solve saddle-problem \eqref{exp_pr_4}. }
\label{fig:3}
\end{figure}

According to the results of the experiments, one can see that for the considered problems, the methods with the same step work either as described in the theory (slower $n$ times or $\log n$ times) or generally the same as the full-gradient method.

\section{Possible generalizations}\label{sec:generalizations}

In this paper, we consider non-smooth cases. Our results can be generalized for the case of strongly convex functions by using the restart technique (see for example \cite{gasnikov2017universal}). It seems that one can do it analogously.\footnote{To say in more details this can be done analogously for deterministic setup. As for stochastic setup, we need to improve the estimates in this paper by changing the Bregman diameters of the considered convex sets $\Omega$ by Bregman divergence between starting point and solution. This requires more accurate calculations (like in \cite{dvurechensky2018accelerated2}) and doesn't include in this paper. Note that all the constants, that characterized smoothness, stochasticity and strong convexity in all the estimates in this paper can be determined on the intersection of considered convex sets and Bregman balls around the solution of a radii equals to (up to logarithmic factors) the Bregman divergence between the starting point and the solution.} Generalization of the results of \cite{dvurechensky2018accelerated1,dvurechensky2018accelerated2,VorontsovaGGD19}  and \cite{alkousa2019accelerated,lin2020near} for the gradient-free saddle-point set-up is more challenging. Also, based on combinations of ideas from \cite{alkousa2019accelerated,ivanova2020oracle} it'd be interesting to develop a mixed method with a gradient oracle for $x$ (outer minimization) and a gradient-free oracle for $y$ (inner maximization). 


%
%
%
\bibliographystyle{splncs04}
\bibliography{literature}

\appendix

\section{General facts}

\begin{lemma}[see inequality 5.3.18 from \cite{nemirovski}]
Let $d(z): \mathcal{Z} \to \mathbb{R}$ is prox-function and $V_z(w)$ define Bregman divergence  associated with $d(z)$.
The following equation holds for $x,y,u \in X$:
\begin{eqnarray}
\label{temp111}
\langle \nabla d(x) - \nabla d(y), u -x \rangle = V_y(u)-V_x(u)-V_y(x).
\end{eqnarray}
\end{lemma}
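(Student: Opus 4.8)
The plan is to prove \eqref{temp111} by direct expansion: it is the classical three-point identity (``law of cosines'') for Bregman divergences, and the only ingredients needed are the definition of $V$ and the bilinearity of $\langle\cdot,\cdot\rangle$. Differentiability of $d$ on $\mathcal{Z}$ (part of the prox-function assumption) is what makes $\nabla d(x)$ and $\nabla d(y)$ meaningful; $1$-strong convexity plays no role in this particular identity.

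First I would substitute the definition of the Bregman divergence into each of the three terms on the right-hand side, writing $V_y(u)$, $V_x(u)$ and $V_y(x)$ out explicitly in terms of the scalar values $d(x)$, $d(y)$, $d(u)$ and the inner products of $\nabla d(x)$, $\nabla d(y)$ with the relevant difference vectors. Next I would form the combination $V_y(u) - V_x(u) - V_y(x)$ and collect like terms. The key observation is that each of the function values $d(x)$, $d(y)$, $d(u)$ enters with coefficients summing to zero, so all of them cancel, leaving only a sum of inner-product terms. Regrouping those terms by the gradient they contain and using linearity of the inner product in its second slot, the remaining expression collapses to $\langle \nabla d(x), u-x\rangle - \langle \nabla d(y), u-x\rangle = \langle \nabla d(x)-\nabla d(y), u-x\rangle$, which is exactly the left-hand side of \eqref{temp111}.

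Since the argument is elementary algebra there is no genuine obstacle. The only thing to watch is the bookkeeping: one must keep the order of the two arguments of $V_\cdot(\cdot)$ consistent with the convention fixed in the definition above — swapping them flips signs — and track the three signs in $V_y(u)-V_x(u)-V_y(x)$ carefully, so that the cancellation of the $d(\cdot)$-terms is complete and the surviving linear part reassembles into $\langle\nabla d(x)-\nabla d(y),\,u-x\rangle$.
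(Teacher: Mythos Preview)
Your proposal is correct: the three-point identity is indeed proved by direct expansion of the three Bregman-divergence terms using the definition and collecting like terms, exactly as you outline. The paper itself does not give a proof of this lemma---it simply states the identity and cites \cite{nemirovski}---so your argument is the standard one and there is nothing to compare against. One small remark: your parenthetical that ``swapping them flips signs'' is not quite accurate (the Bregman divergence is neither symmetric nor antisymmetric in its two arguments), but this does not affect your proof, which relies only on careful expansion and cancellation.
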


\begin{lemma}[Fact 5.3.2 from \cite{nemirovski}]
    Given norm $\|\cdot \|$ on space $\mathcal{Z}$ and prox-function $d(z)$, let $z \in \mathcal{Z}$, $w \in \mathbb{R}^n$ and $z_{+} = \text{prox}_z(w)$. Then for all $u \in \mathcal{Z}$
    \begin{eqnarray}
        \label{lemma3_1}
         \langle w, z_{+} - u\rangle \leqslant V_{z}(u) - V_{z_{+}}(u) - V_{z}(z_{+}).
    \end{eqnarray}
\end{lemma}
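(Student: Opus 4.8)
The plan is to obtain \eqref{lemma3_1} directly from the first-order optimality condition for the convex problem that defines the prox-operator, and then to rewrite the gradient terms as Bregman divergences using the identity \eqref{temp111}.

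First I would note that, because $d$ is a prox-function (hence $1$-strongly convex and differentiable) and $\mathcal{Z}$ is convex and compact, the objective $y \mapsto V_z(y) + \langle w, y\rangle$ is strongly convex and differentiable on $\mathcal{Z}$; therefore $z_{+} = \text{prox}_z(w)$ exists, is unique, and is characterized by the variational inequality
\begin{equation*}
\langle \nabla d(z_{+}) - \nabla d(z) + w,\ u - z_{+}\rangle \geq 0 \qquad \text{for all } u \in \mathcal{Z},
\end{equation*}
where I used $\nabla_y\bigl[V_z(y) + \langle w, y\rangle\bigr] = \nabla d(y) - \nabla d(z) + w$. Rearranging,
\begin{equation*}
\langle w,\ z_{+} - u\rangle \leq \langle \nabla d(z_{+}) - \nabla d(z),\ u - z_{+}\rangle \qquad \text{for all } u \in \mathcal{Z}.
\end{equation*}

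Next I would invoke the three-point identity \eqref{temp111} with $x = z_{+}$ and $y = z$, which gives
\begin{equation*}
\langle \nabla d(z_{+}) - \nabla d(z),\ u - z_{+}\rangle = V_z(u) - V_{z_{+}}(u) - V_z(z_{+}).
\end{equation*}
Combining the last two displays yields $\langle w, z_{+} - u\rangle \leq V_z(u) - V_{z_{+}}(u) - V_z(z_{+})$, which is exactly \eqref{lemma3_1}. This is the classical prox-inequality, so I do not expect a genuine obstacle; the only steps needing a word of care are the justification that the first-order condition is simultaneously necessary and sufficient (convexity of the objective together with convexity of $\mathcal{Z}$), and keeping track of which slot of $V$ plays the role of the ``center'' when applying \eqref{temp111}, so that the three divergence terms appear with the correct signs.
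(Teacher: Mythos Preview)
Your proposal is correct and is the standard derivation of the prox-inequality: apply the first-order optimality condition for the strongly convex minimization defining $\text{prox}_z(w)$, then rewrite the resulting gradient difference via the three-point identity \eqref{temp111}. The paper itself does not give a proof of this lemma at all---it is listed in the appendix under ``General facts'' and simply cited as Fact~5.3.2 from \cite{nemirovski}---so there is nothing to compare against beyond noting that your argument is exactly the classical one that underlies the cited reference.
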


\begin{lemma}
For arbitrary integer $n\ge 1$ and arbitrary set of positive numbers $a_1,\ldots,a_n$ we have
\begin{equation}
    \left(\sum\limits_{i=1}^m a_i\right)^2 \le m\sum\limits_{i=1}^m a_i^2.\label{eq:squared_sum}
\end{equation}
\end{lemma}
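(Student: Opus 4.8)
The plan is to derive the inequality directly from the Cauchy--Schwarz inequality applied to the vector $a = (a_1,\ldots,a_m)$ and the all-ones vector $\one = (1,\ldots,1)$ in $\R^m$. Concretely, I would start from the standard estimate
\begin{equation*}
\left(\sum_{i=1}^m a_i\cdot 1\right)^2 \le \left(\sum_{i=1}^m a_i^2\right)\left(\sum_{i=1}^m 1^2\right),
\end{equation*}
and then observe that the second factor on the right equals $m$, which yields the claim immediately. Since the statement is asserted for positive numbers, no sign issues arise, but in fact the argument works verbatim for arbitrary reals.

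As an alternative self-contained route that avoids invoking Cauchy--Schwarz as a black box, I would expand the left-hand side as $\left(\sum_i a_i\right)^2 = \sum_i a_i^2 + \sum_{i\neq j} a_i a_j$, and bound each cross term using the elementary inequality $2a_i a_j \le a_i^2 + a_j^2$ (which is just $(a_i-a_j)^2\ge 0$). Summing over the $m(m-1)$ ordered pairs $i\neq j$ gives $\sum_{i\neq j} a_i a_j \le (m-1)\sum_i a_i^2$, and adding $\sum_i a_i^2$ produces the bound $m\sum_i a_i^2$. A third option would be to phrase it as Jensen's inequality for the convex function $t\mapsto t^2$ applied to the uniform average $\frac1m\sum_i a_i$.

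There is essentially no obstacle here: the statement is a one-line consequence of Cauchy--Schwarz, and the only thing to be careful about is getting the constant $m$ (not $m^2$ or $\sqrt m$) on the right-hand side, which the all-ones vector delivers exactly. I would write the Cauchy--Schwarz version as the main proof and perhaps remark that the expansion argument gives the same conclusion elementarily.
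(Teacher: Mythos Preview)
Your proposal is correct; the Cauchy--Schwarz argument with the all-ones vector is the standard one-line proof, and your alternative expansion and Jensen routes are equally valid. The paper itself states this lemma as a general fact in the appendix without giving any proof, so there is nothing to compare against --- any of your three approaches would serve perfectly well.
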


\begin{lemma}[Lemma 9 from \cite{Shamir15}]\label{lem:lemma_9_shamir} For any function $g$ which  is $L$-Lipschitz with respect to the $\ell_2$-norm, it holds that if $e$ is uniformly distributed on the Euclidean unit sphere, then 
\begin{equation*}
    \sqrt{\mathbb{E}[(g(e) - \mathbb{E}g(e))^4]} \leq c \frac{L^2}{n}
\end{equation*}
for some numerical constant $c$. One can note that $c \leq 3$.
\end{lemma}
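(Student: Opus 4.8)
The plan is to deduce the estimate from the concentration-of-measure phenomenon on the Euclidean sphere. First I would note that the restriction of $g$ to $\mathcal{RS}^n_2(1)$ remains $L$-Lipschitz when the sphere carries its geodesic metric, because the chordal (Euclidean) distance never exceeds the geodesic distance; hence L\'evy's lemma applies. This produces absolute constants $c_1,c_2>0$ such that, with $e$ uniform on $\mathcal{RS}^n_2(1)$,
\begin{equation*}
\mathbb{P}\bigl(|g(e)-\mathbb{E}g(e)|\ge t\bigr)\le c_1\exp\!\left(-\frac{c_2\,n\,t^2}{L^2}\right),\qquad t\ge 0.
\end{equation*}
Equivalently, one may invoke the log--Sobolev inequality on $\mathcal{RS}^n_2(1)$ with constant $n-1$ together with the Herbst argument, which directly shows that $g(e)-\mathbb{E}g(e)$ is sub-Gaussian with variance proxy of order $L^2/n$.

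Next I would convert the tail bound into a fourth-moment bound via the layer-cake formula: writing $Y\eqdef g(e)-\mathbb{E}g(e)$ and using $Y^4=|Y|^4$,
\begin{equation*}
\mathbb{E}[Y^4]=\int_0^\infty 4t^3\,\mathbb{P}(|Y|\ge t)\,dt\le 4c_1\int_0^\infty t^3\exp\!\left(-\frac{c_2 n t^2}{L^2}\right)dt.
\end{equation*}
The remaining integral is elementary: with $\beta\eqdef c_2 n/L^2$, the substitution $s=t^2$ gives $\int_0^\infty t^3 e^{-\beta t^2}\,dt=\tfrac{1}{2\beta^2}$, hence $\mathbb{E}[Y^4]\le 2c_1 L^4/(c_2^2 n^2)$, and taking square roots yields
\begin{equation*}
\sqrt{\mathbb{E}[Y^4]}\le \frac{\sqrt{2c_1}}{c_2}\cdot\frac{L^2}{n},
\end{equation*}
which is exactly the claimed inequality with $c=\sqrt{2c_1}/c_2$.

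The only remaining point is the explicit value $c\le 3$. This is a matter of inserting the sharp constants in L\'evy's lemma (or, via the sub-Gaussian route, the sharp $L^4$-moment inequality for sub-Gaussian variables) and absorbing the discrepancy between $\tfrac1n$ and $\tfrac1{n-1}$, which costs only a factor $\tfrac{n}{n-1}\le\tfrac32$ for $n\ge 3$ --- the regime used throughout the paper. I expect this constant bookkeeping, rather than any conceptual difficulty, to be the main obstacle; for the precise figure one can simply refer to the computation in \cite{Shamir15}.
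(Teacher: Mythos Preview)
The paper does not actually prove this lemma: it is stated in the appendix as a quoted result from \cite{Shamir15} with no accompanying argument, so there is no ``paper's own proof'' to compare against. Your concentration-of-measure derivation via L\'evy's inequality and the layer-cake formula is correct and is precisely the standard route to such fourth-moment bounds on the sphere; the only loose end you acknowledge yourself, namely the explicit constant $c\le 3$, is indeed just bookkeeping that the original reference carries out.
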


\section{Additional experiments}\label{add_exp}

First, we present the experimental results for the classical saddle problem, which was considered in Section \ref{sec:experiments}. 

Figure \ref{fig:4} gives the results for the problem of size $200\times200$ and $500\times500$ with different noise of elements. The method for generating the matrix is the same as in Section \ref{sec:experiments}.

\begin{figure}[h]
\centering
\begin{minipage}{0.5\textwidth}
\includegraphics[width =  \textwidth]{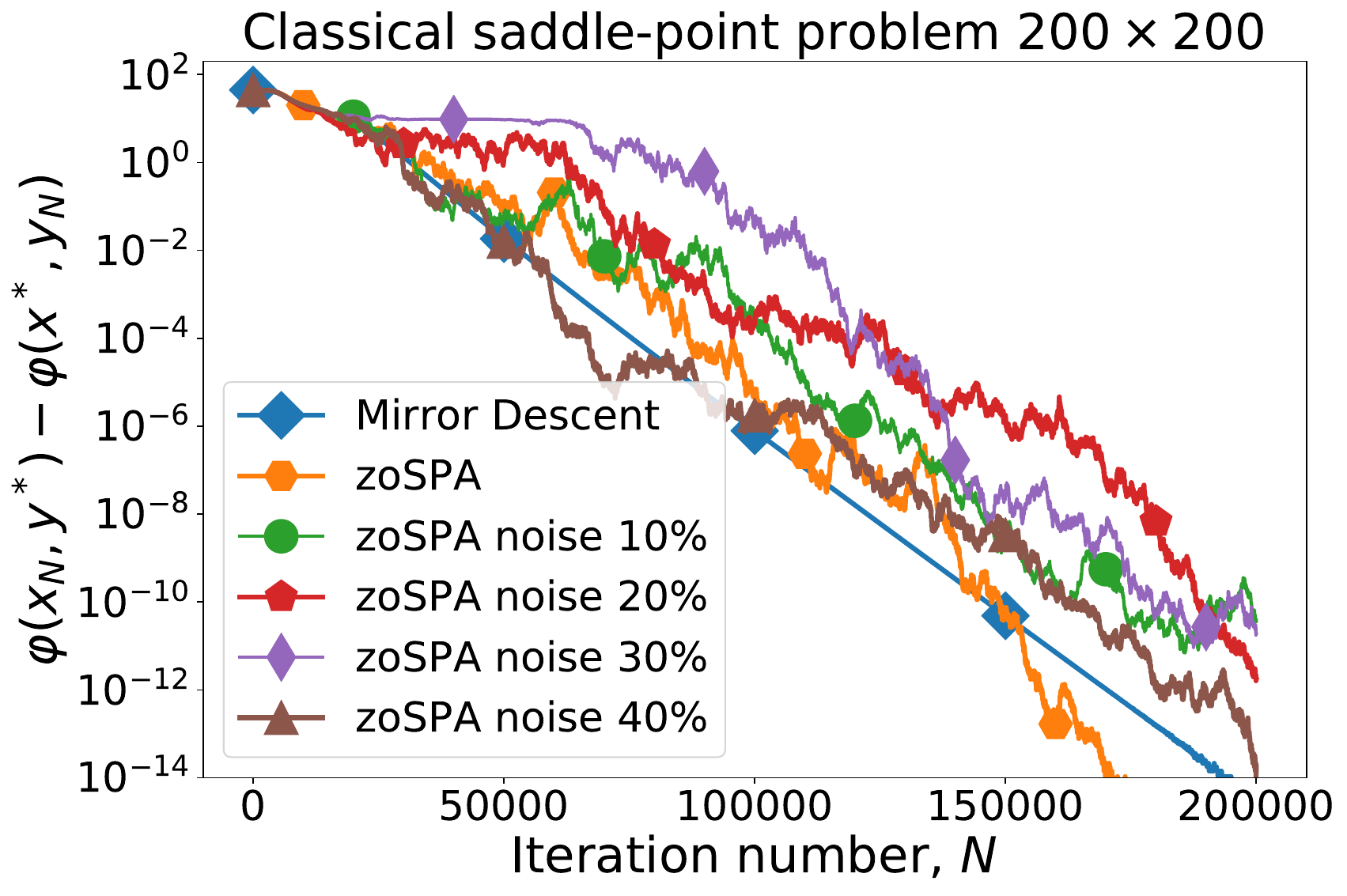}
\end{minipage}%
\begin{minipage}{0.5\textwidth}
\includegraphics[width =  \textwidth]{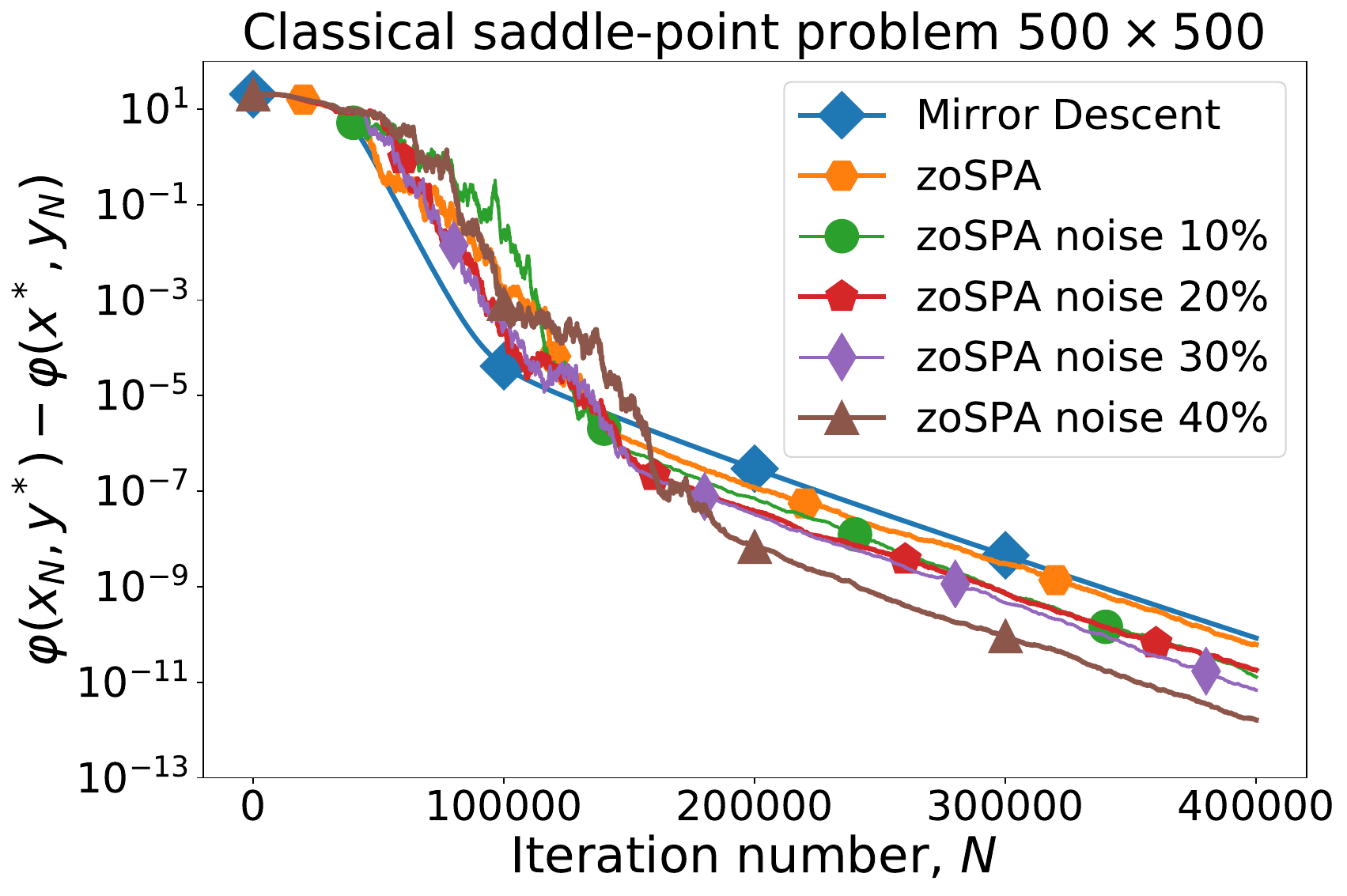}
\end{minipage}%
\\
\begin{minipage}{0.5\textwidth}
\centering
~~~(a) $200 \times 200$
\end{minipage}%
\begin{minipage}{0.5\textwidth}
\centering
~~~~~(b) $500 \times 500$
\end{minipage}%

\caption{{\tt zoSPA} with noise, {\tt Mirror Descent} applied to solve saddle-problem \eqref{exp_pr_4} size of: (a) - $200 \times 200$, (b) - $500 \times 500$.}
\label{fig:4}
\end{figure}

Next, we study how the convergence of the algorithms depends on the random generation of the matrix. We consider 3 random seeds and generate a matrix (see Section \ref{sec:experiments}). Figure \ref{fig:6} (a) shows the experimental results. One can note that the convergence rate depends on the matrix, but our Algorithm 1 and full-gradient Mirror Descent converge approximately the same for the same matrix.

Figure \ref{fig:6} (b) shows the results of an experiment where we compare the convergence of algorithms for various "saddle sizes"{}. For the first experiment we use matrix generation from Section \ref{sec:experiments}. Then we take the same matrix (do not generate it again) and multiply the row, where the saddle point is located, by 4, and we multiply the saddle point itself not by 4, but by 2. In the third experiment we do the same, but with factors of 25 and 5. And in the last case, we divide the row with the saddle-point by 2 and add 0.5 to each element. We do the following transformations, and do not generate the matrix again, for additional purity of the experiment, because using this approach, the ratio of elements in the matrix remains almost unchanged, only the "size of the saddle"{} changes. 

In the last experiment with the classical saddle problem, we change the oracle a bit: we began to take $\mathbf{e}_x$ and $\mathbf{e}_y$ with one unit and all other zeros. We conducted an experiment for a problem of $100 \times 100$.

\begin{figure}[h]
\centering
\begin{minipage}{0.33\textwidth}
\includegraphics[width =  0.95\textwidth]{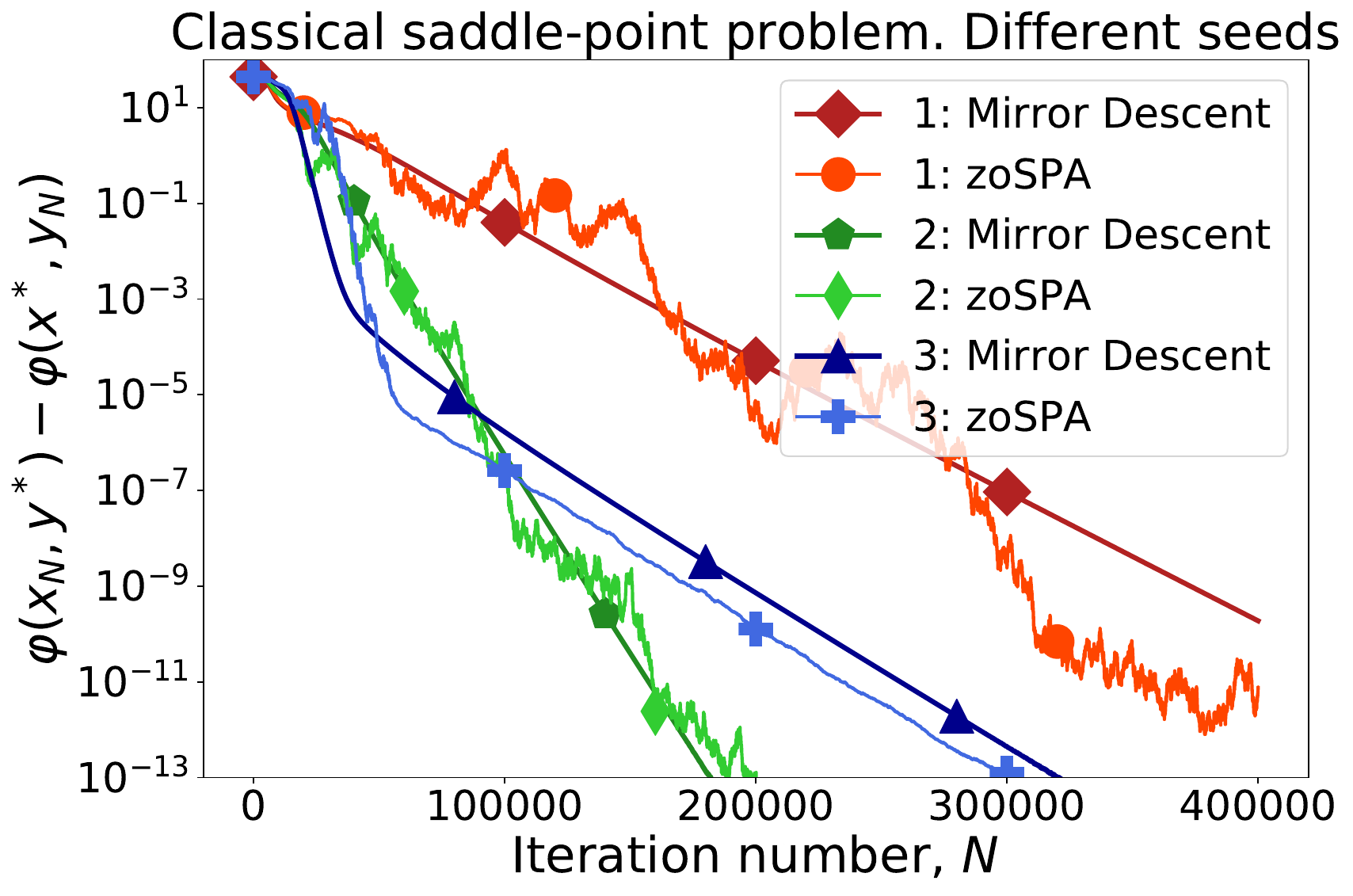}
\end{minipage}%
\begin{minipage}{0.33\textwidth}
\includegraphics[width =  \textwidth]{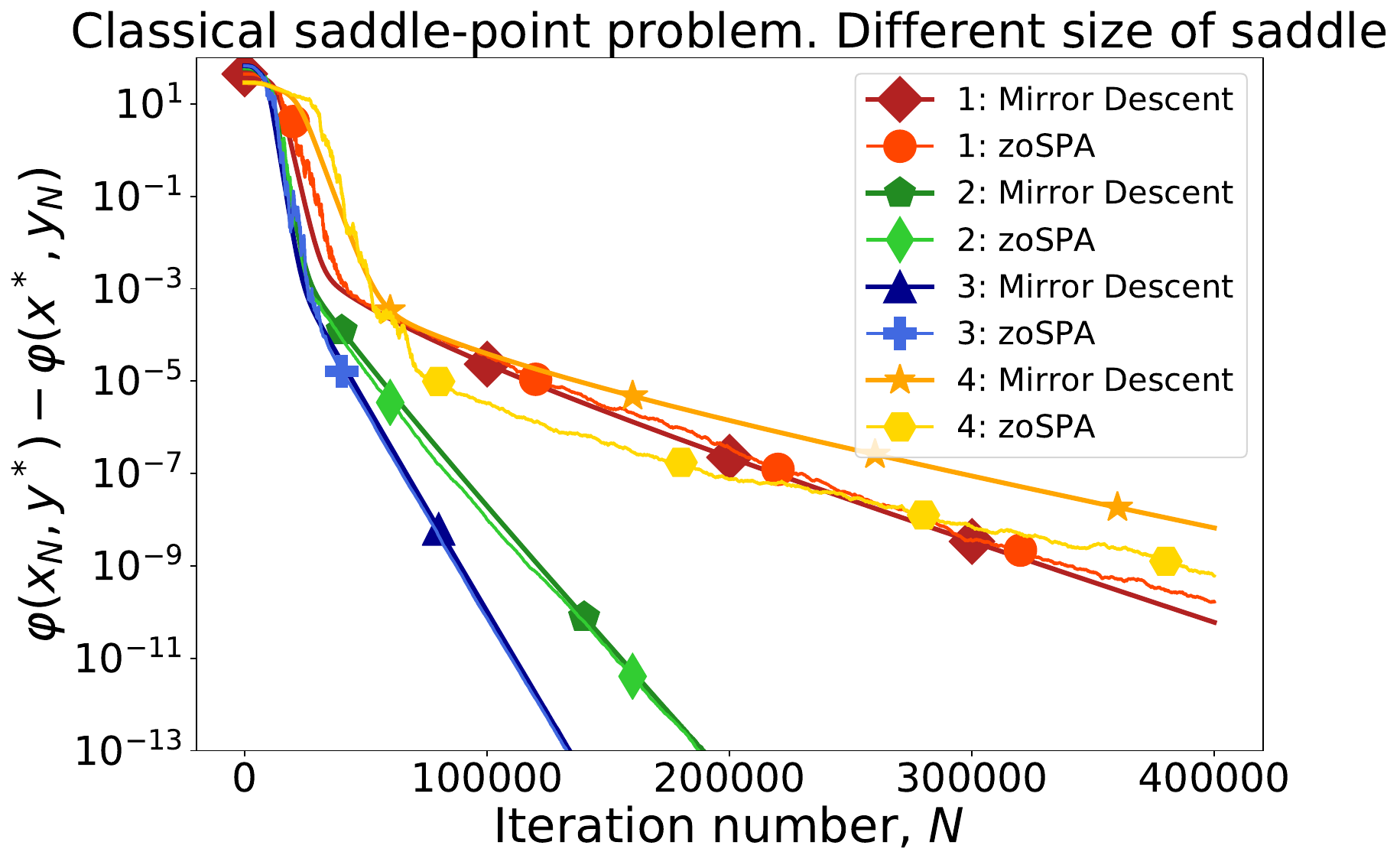}
\end{minipage}%
\begin{minipage}{0.33\textwidth}
\includegraphics[width =  \textwidth]{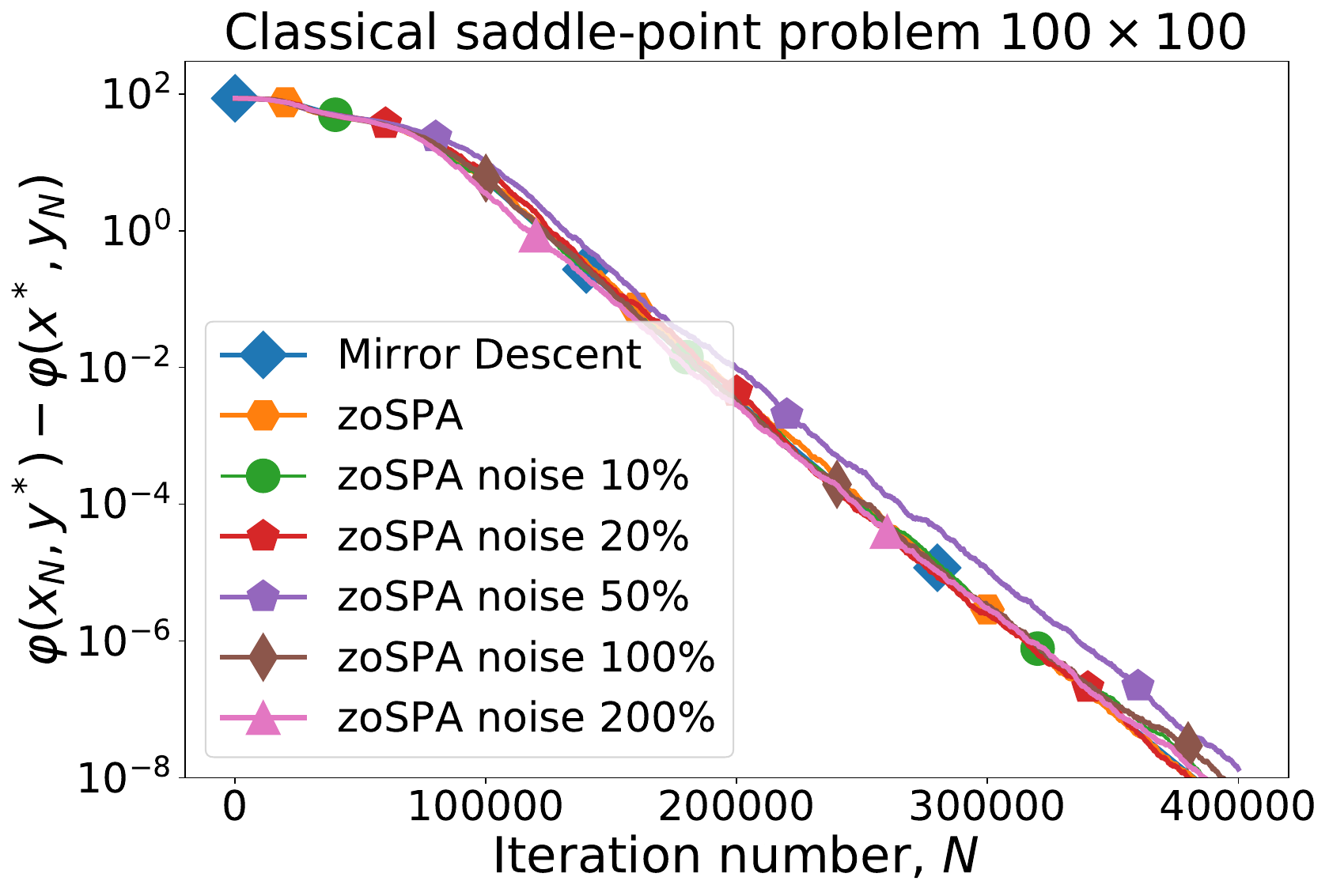}
\end{minipage}%
\\
\begin{minipage}{0.33\textwidth}
\centering
~~~(a) different seeds
\end{minipage}%
\begin{minipage}{0.33\textwidth}
\centering
~~~~~(b) different "sizes"{}
\end{minipage}%
\begin{minipage}{0.33\textwidth}
\centering
~~~~~(c) other oracle{}
\end{minipage}%

\caption{{\tt zoSPA}, {\tt Mirror Descent} applied to solve saddle-problem \eqref{exp_pr_4}: (a) - with different random seeds for matrix, (b) - with different "saddle sizes"{}, (c) - with other oracle.}
\label{fig:6}
\end{figure}

Next, we give other problems. As in the previous experiments, we compare Algorithm 1 and the full-gradient Mirror Descent. For our algorithm, we used the step is the same as in Mirror Descent and the step obtained in theory, i.e. $n^{1/q}$ times less than for Mirror Descent (see Theorem 1):

\begin{itemize}
    \item In the first experiment, we consider the monkey-saddle problem in the point $(1;1)$:
\begin{equation}
    \label{exp_pr_1}
    \min_{x\in \mathbb{R}}\max_{y\in \mathbb{R}}  \left[(x - 1)^3 - 3 (x-1)(y-1)^2 \right]
\end{equation}
\item For the second experiment, we take the problem:
\begin{equation}
    \label{exp_pr_2}
    \min_{x\in \mathbb{R}^n}\max_{y\in \mathbb{R}^n} \left[ \langle a, x - b \rangle^2 - \langle c, y - d \rangle^2 \right],
\end{equation}
where $x$, $y$ -- vectors with dimension $n = 100$, vectors $a$, $b$, $c$, $d$ are randomly generated from uniform distribution on $[0,1]$. 

\item In the third and fourth experiments we consider the following problem:
\begin{eqnarray}
\min_{x \in \mathbb{R}^n} & &\frac{1}{2} x^T A x - b^T x \nonumber\\
\text{s.t.} & &Cx = d,
\label{temp_exp}
\end{eqnarray}
where $A \in \mathbb{S}^{n}$ (symmetric matrix), $C$ is a matrix size of $k \times n$. One can rewrite \eqref{temp_exp} in the following way:
\begin{eqnarray}
\label{exp_pr_3}
\min_{x\in \mathbb{R}^n}\max_{y\in \mathbb{R}^k} \left[ \frac{1}{2} x^T A x - b^T x + y^T Cx - y^T d\right],
\end{eqnarray}
where the expression in brackets is the Lagrange function $L(x,y)$, and $y$ -- Lagrange multiplier. Problem \eqref{exp_pr_3} is a saddle-point problem.

In the third experiment we take $n=100$, $k = 10$. Matrix $A$ is positive finite. We first randomly generated positive eigenvalues, then converted them into a diagonal matrix $D$, then we made an orthogonal matrix $Q$ from random squared matrix using QR-decomposition, and finally we get $A$ by $A = Q^T D Q$. Matrix $C$ and vectors $b$, $d$ are obtained randomly.
\begin{figure}[H]
\centering
\begin{minipage}{0.33\textwidth}
\includegraphics[width =  \textwidth]{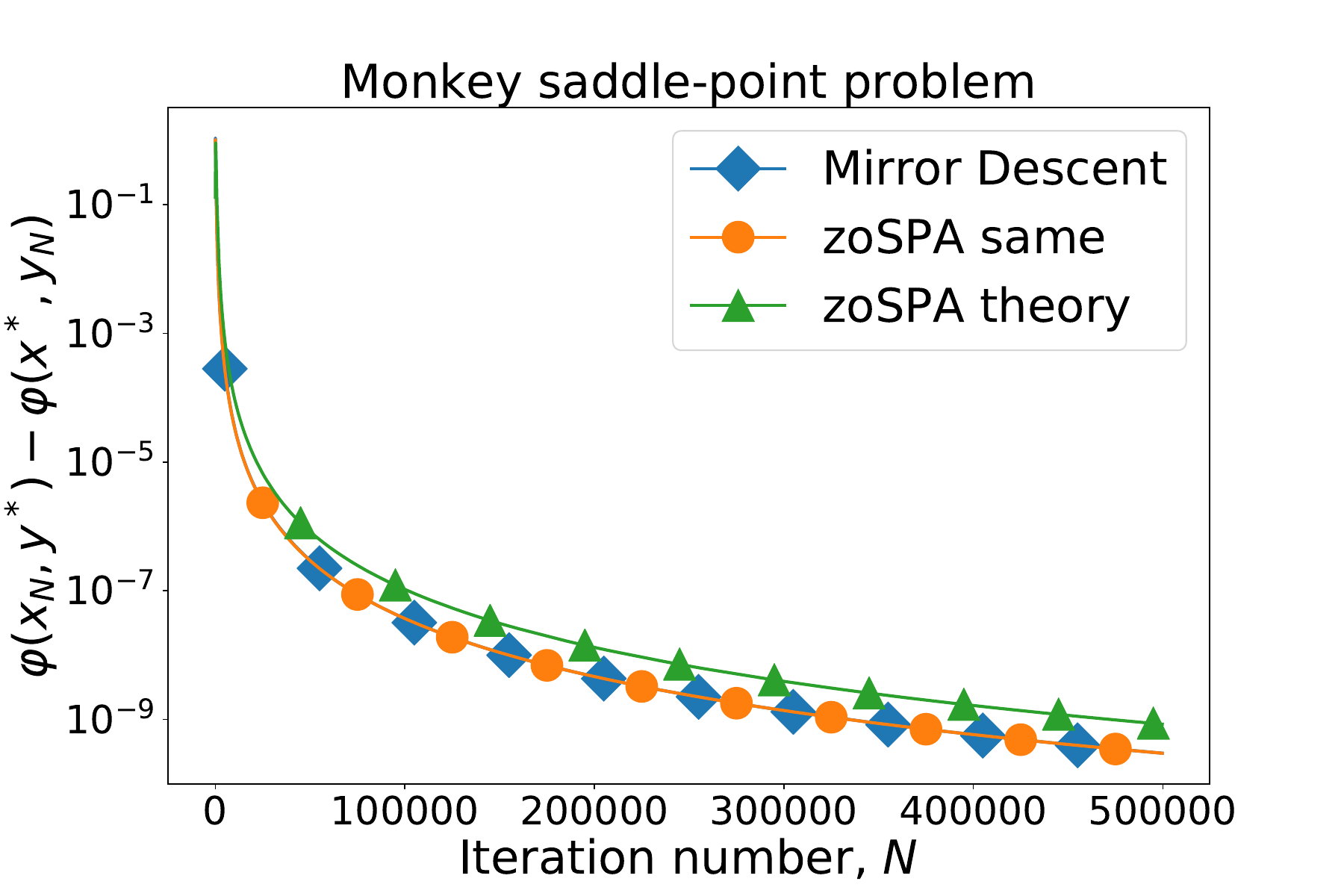}
\end{minipage}%
\begin{minipage}{0.33\textwidth}
  \centering
\includegraphics[width =  \textwidth]{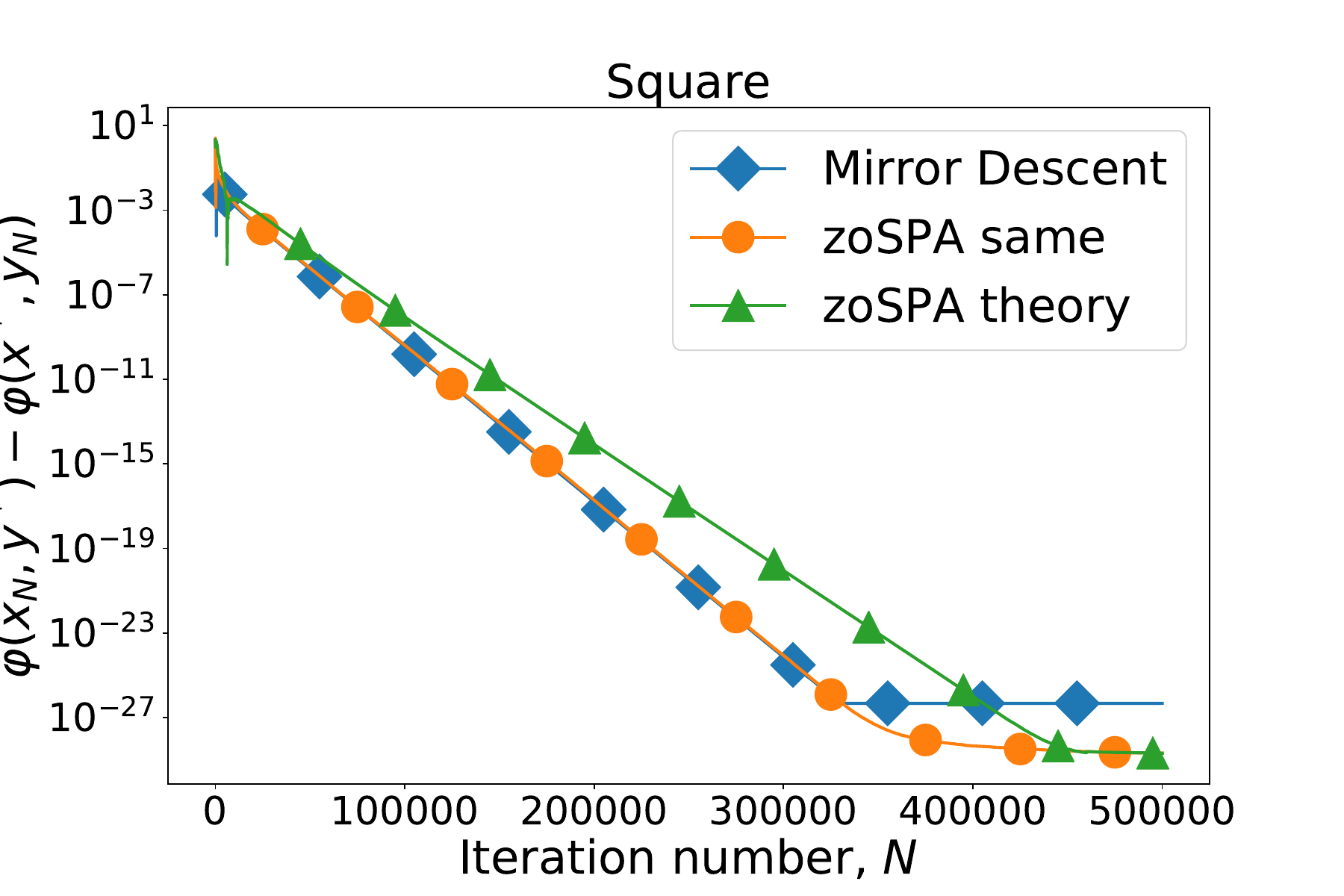}
\end{minipage}%
\begin{minipage}
{0.33\textwidth} \includegraphics[width =  \textwidth]{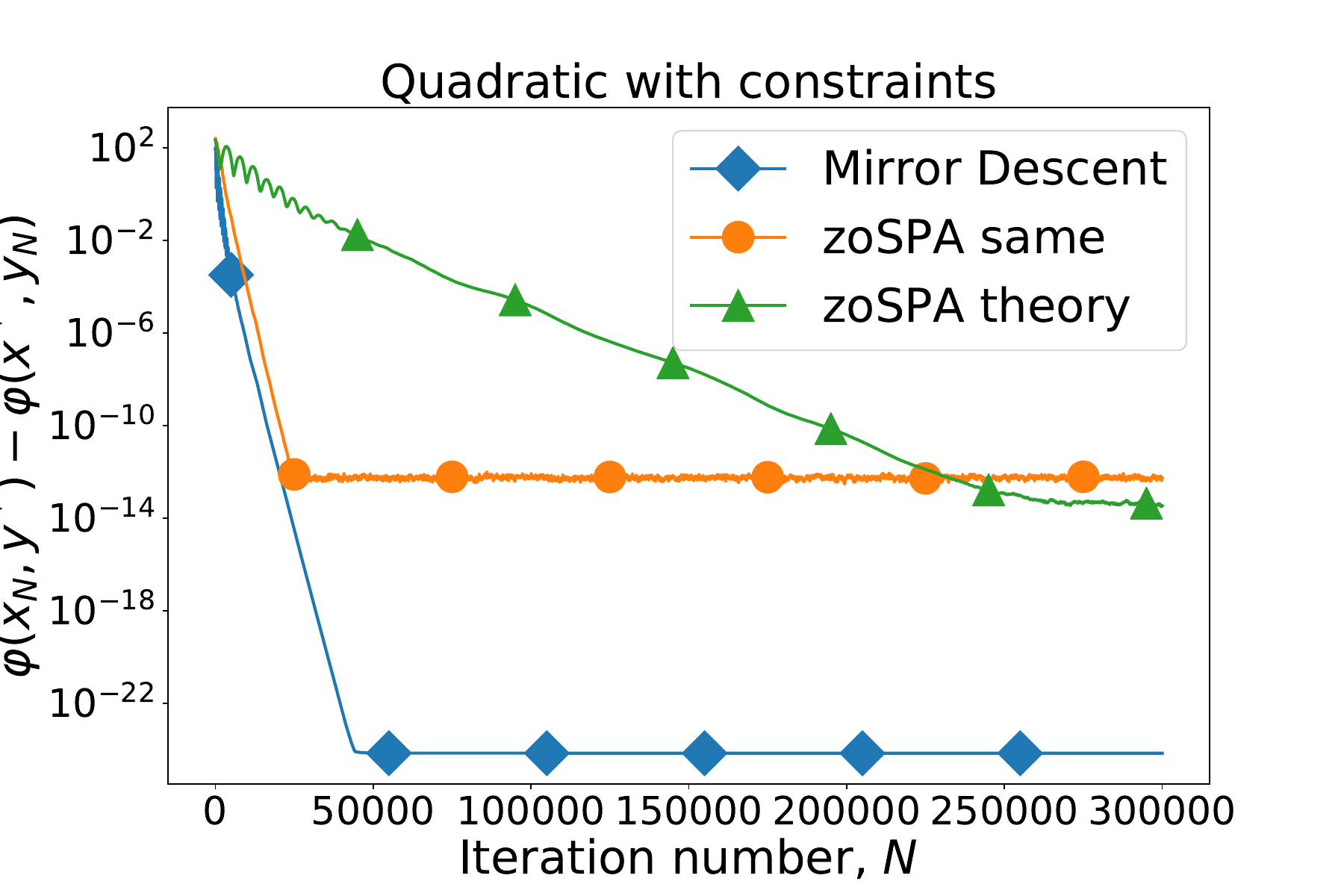}
\end{minipage}%
\\
\begin{minipage}{0.33\textwidth}
\centering
~~~(a) 
\end{minipage}%
\begin{minipage}{0.33\textwidth}
\centering
~~~~(b) 
\end{minipage}%
\begin{minipage}{0.33\textwidth}
\centering
~~~~(c) 
\end{minipage}%
\caption{{\tt zoSPA}, {\tt Mirror Descent} applied to solve saddle-problem: (a)\eqref{exp_pr_1}, (b) \eqref{exp_pr_2}, (c)\eqref{exp_pr_3}.}
\label{fig:1}
\end{figure}

\item In the fourth experiment we take $n=100$, $k = 100$. Matrix $A$ is positive semi-definite. In the first case, we start at zero point (as described in the algorithm). In the second case, we start at a point that is close to the solution.

Figure \ref{fig:2} shows the experimental results: (a) real convergence from the zero point, (b) scaled convergence, i.e. two convergence graphs are taken starting from 200,000 iterations and are scaled so that the lines come from one point, (c) the real convergence from a point close to the solution.
\begin{figure}[h]
\centering
\begin{minipage}{0.33\textwidth}
\includegraphics[width =  \textwidth]{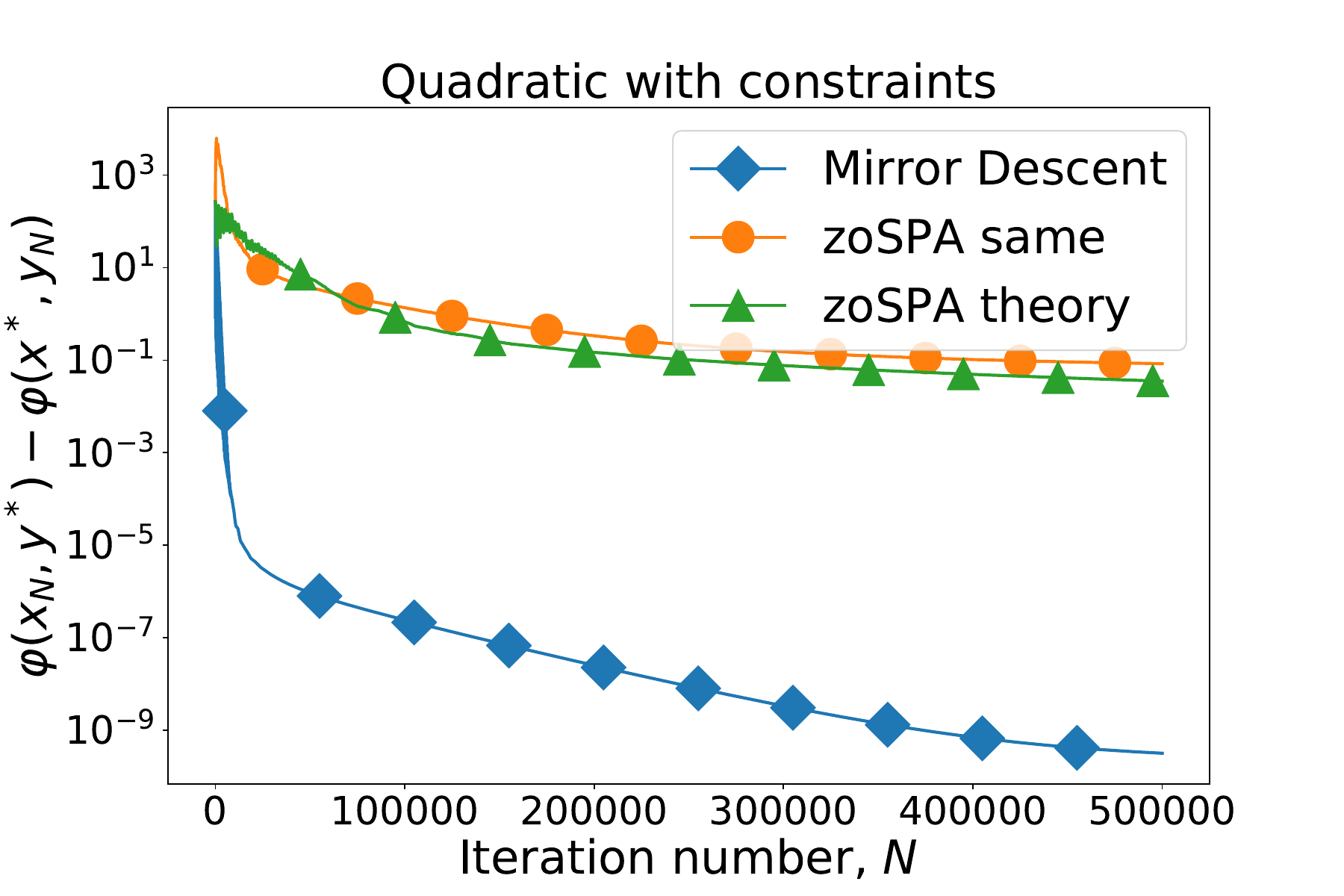}
\end{minipage}%
\begin{minipage}
{0.33\textwidth} \includegraphics[width =  \textwidth]{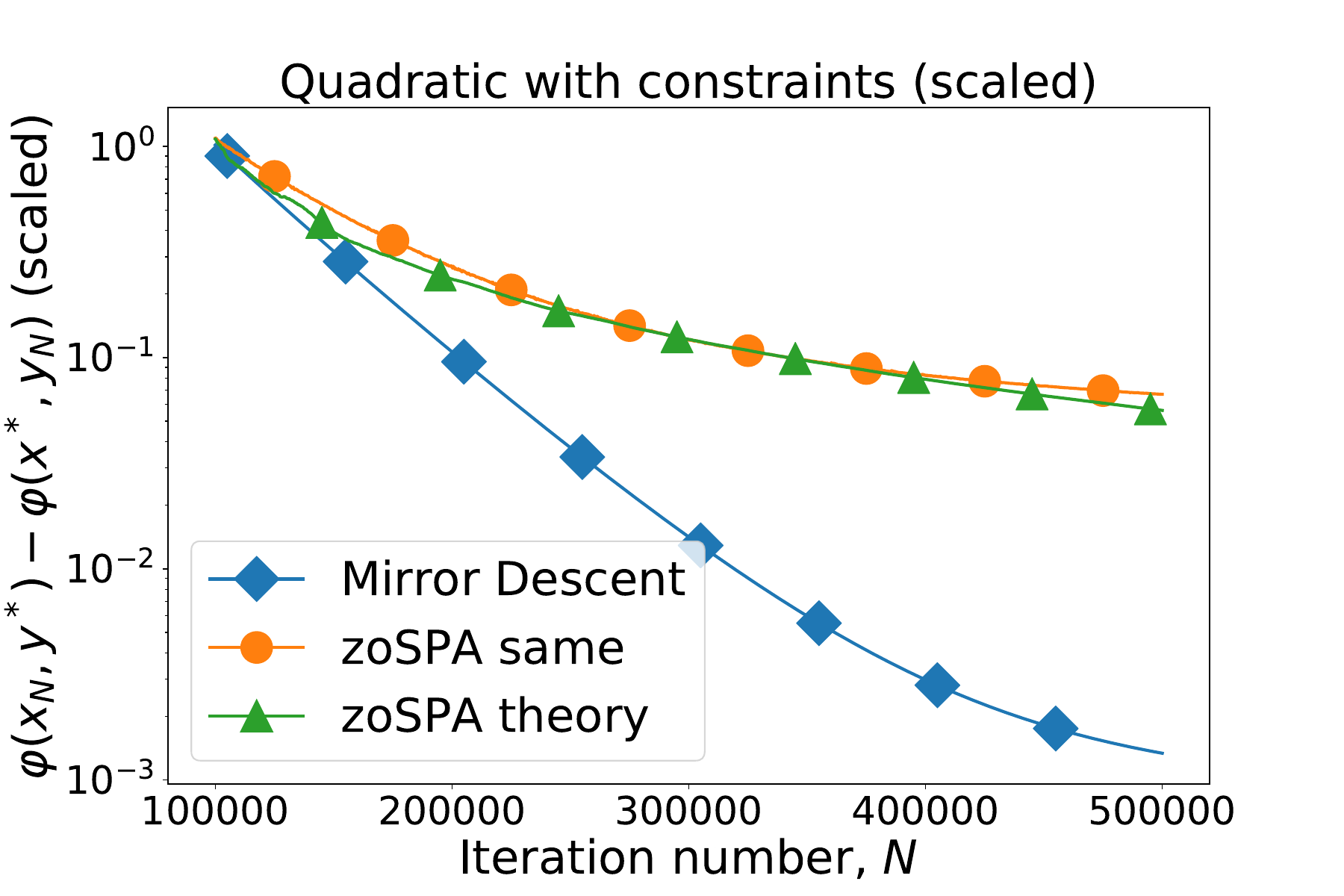}
\end{minipage}%
\begin{minipage}
{0.33\textwidth} \includegraphics[width =  \textwidth]{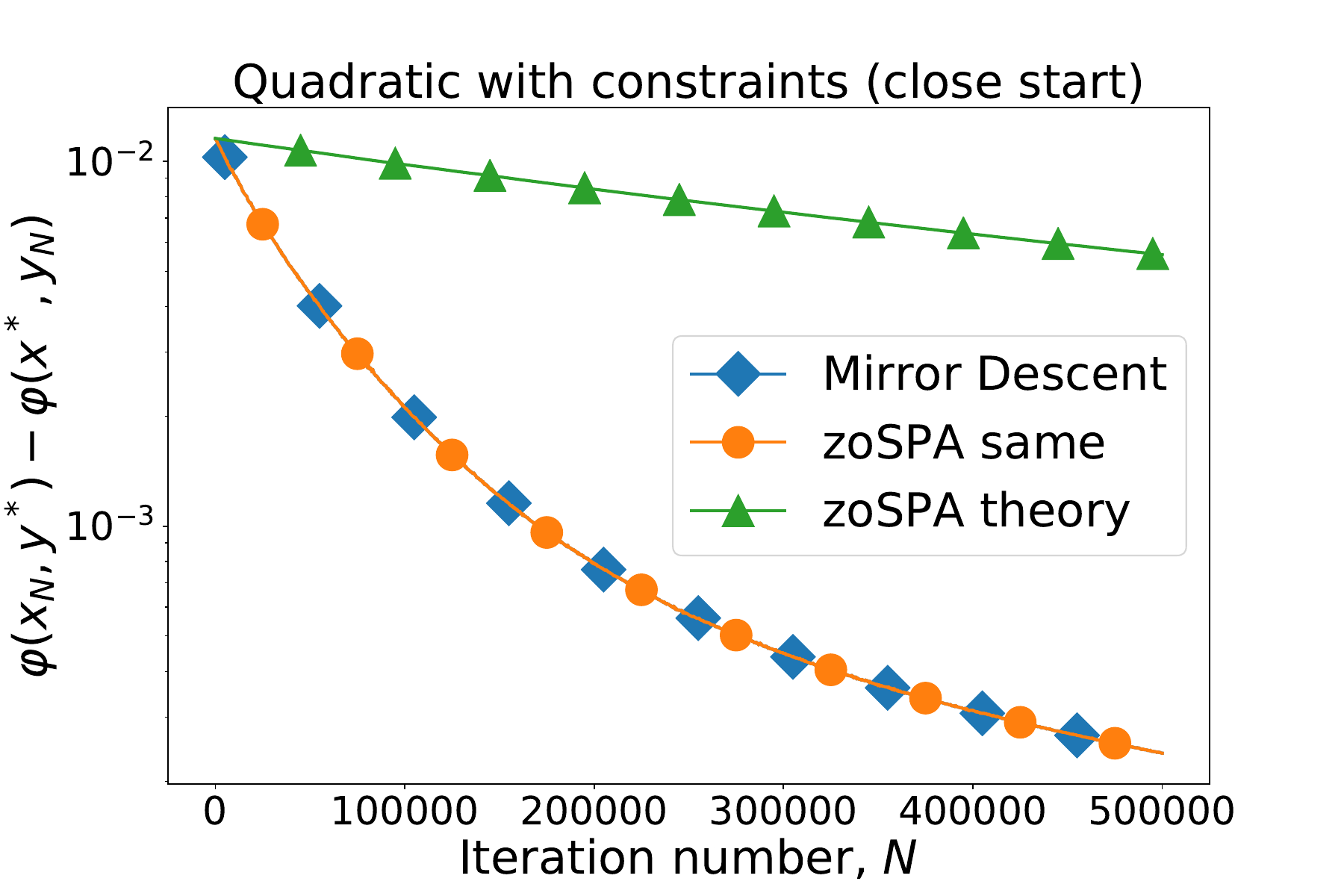}
\end{minipage}%
\\
\begin{minipage}{0.33\textwidth}
\centering
~~~(a) 
\end{minipage}%
\begin{minipage}{0.33\textwidth}
\centering
~~~~(b) 
\end{minipage}%
\begin{minipage}{0.33\textwidth}
\centering
~~~~(c) 
\end{minipage}%
\caption{{\tt zoSPA}, {\tt Mirror Descent} applied to solve saddle-problem \eqref{exp_pr_3}: (a) zero start, (b) zero start (scaled), (c) close start.}
\label{fig:2}
\end{figure}

\end{itemize}

\end{document}